\theoremstyle{definition}
\newtheorem{definition}{Definition}[section]
\newtheorem{theorem}[definition]{Theorem}
\newtheorem{lemma}[definition]{Lemma}
\newtheorem{proposition}[definition]{Proposition}
\newtheorem{corollary}[definition]{Corollary}
\newtheorem{case}{Case}
\newtheorem{claim}{Claim}
\numberwithin{equation}{section}
\newcommand{\C}{\mathbb{C}}
\newcommand{\N}{\mathbb{N}}
\newcommand{\Z}{\mathbb{Z}}
\def\rank{\mathrm{rank }}
\def\spn{\mathrm{span}}
\def\Ind{\mathrm{Ind}}
\def\0{\mathbf{0}}
\def\a{\alpha}
\def\b{\beta}
\def\l{\lambda}
\def\Vir{\mathrm{Vir}}
\date{}
\title{New irreducible tensor product modules for the Virasoro algebra (II)}
\author{Xuewen Liu, Xiangqian Guo and Jing Wang}
\begin{document}

\maketitle
\begin{abstract} In this paper, we obtain a class of Virasoro modules by taking
tensor products of the irreducible Virasoro modules
$\Omega(\lambda,\alpha,h)$ and $\Omega(\mu, \b)$ with irreducible
highest weight modules $V(\theta,h)$ or with irreducible Virasoro
modules Ind$_{\theta}(N)$ defined in \cite{MZ2}. We obtain the
necessary and sufficient conditions for such tensor product modules
to be irreducible, and determine the necessary and sufficient
conditions for two of them to be isomorphic. We also compare these
modules with other known non-weight Virasoro modules.

\vskip 11pt \noindent {\em Keywords: Virasoro algebra,  tensor
products non-weight module,  irreducible module.}

\vskip 6pt \noindent {\em 2010  Math. Subj. Class.:} 17B10, 17B20,
17B65, 17B66, 17B68

\vskip 11pt
\end{abstract}

\section{Introduction}
Let $\C, \Z, \Z_+$ and $\N$ be the sets of all complexes, all
integers, all non-negative integers and all positive integers
respectively. The \textbf{Virasoro algebra} $\Vir$ is an infinite
dimensional Lie algebra over the complex numbers $\C$, with basis
$\{d_i,c\,\, |\,\, i \in \Z\}$ and defining relations
$$
[d_{i},d_{j}]=(j-i)d_{i+j}+\delta_{i,-j}\frac{i^{3}-i}{12}c, \quad
i,j \in \Z,
$$
$$
[c, d_{i}]=0, \quad i \in \Z.
$$
The algebra $\Vir$ is one of the most important Lie algebras both in
mathematics and in mathematical physics, see for example \cite{KR,
IK} and references therein. The Virasoro algebra theory has been
widely used in many physics areas and other mathematical branches,
for example, quantum physics \cite{GO}, conformal field theory
\cite{FMS}, vertex algebras \cite{LL},  and so on.

The theory of weight Virasoro modules with finite-dimensional weight
spaces is fairly well developed (see \cite{KR} and references
therein). In particular, a classification of weight Virasoro modules
with finite-dimensional weight spaces was given by Mathieu \cite{M},
and a classification of weight Virasoro modules with at least one
finite dimensional nonzero weight space was given in \cite{MZ1}.
There are some known irreducible non-weight Virasoro modules with
infinite-dimensional weight space was constructed by taking the
tensor product of non-weight Virasoro modules modules and some
highest modules, see \cite{TZ1}. Recently, many authors constructed
many classes of simple non-Harish-Chandra modules, including simple
weight modules with infinite-dimensional weight spaces (see
\cite{CGZ, CM, LLZ, LZ2}) and simple non-weight modules (see
\cite{BM, LGZ, LLZ, MW, MZ1, TZ1, TZ2}).

The purpose of the present paper is to construct new irreducible
non-weight Virasoro modules by taking tensor product of several
known irreducible Virasoro modules defined in \cite{CG} and
\cite{MZ2}. The present paper is organized as follows. In section 2,
we recall some important definitions and results. In section 3, we
obtain a class of irreducible modules over $\Vir$ by taking the
tensor products of $\Omega(\lambda,\alpha,h),\Omega(\mu, \b)$ and the irreducible
module Ind$_{\theta}(N)$.
In section 4, we determine the necessary and sufficient conditions
for two irreducible modules $\otimes_{i=1}^{n}\Omega(\lambda_{i},\alpha_{i},h_{i})\otimes_{j=1}^{m}\Omega(\mu_{j},\b_{j})\otimes V$ and
$\otimes_{i'=1}^{n'}\Omega(\lambda'_{i'},\alpha'_{i'},h'_{i'})\otimes_{j'=1'}^{m'}\Omega(\mu'_{j'},\b'_{j'})\otimes V'$ to be isomorphic.
In section 5, we compare the tensor
product of $\otimes_{i=1}^{n}\Omega(\lambda_{i},\alpha_{i},h_{i})\otimes_{j=1}^{m}\Omega(\mu_{j},\b_{j})\otimes V$ with all other known
non-weight irreducible modules, and obtain that $\otimes_{i=1}^{n}\Omega(\lambda_{i},\alpha_{i},h_{i})\otimes_{j=1}^{m}\Omega(\mu_{j},\b_{j})\otimes V$
is a new module.

\section{Preliminaries}
Firstly, we recall the definition of the Virasoro modules
$\Omega(\l, h, \a)$,$\Omega(\mu,\b)$, $V(\theta,h)$ and $\Ind_{\theta}(N)$ and some
basic properties of them.

\begin{definition}
Fix any $\lambda \in \C^*=\C\setminus\{0\}$, $\a \in \C$ and
$h(t)\in\C[t]$. Let $\Omega(\l, h, \a)=\C[t,\partial]$ as a vector space
and we define the $\Vir$-module action as follows:
\begin{equation*}
d_m\big(\partial^if\big)=
\l^m(\partial-m)^i\Big(\Big(\partial+mh(t)-m(m-1)\a\frac{h(t)-h(\a)}{t-\a}\Big)f(t)-m
(t-m\a)f'(t)\Big),
\end{equation*}
\begin{equation*}
c\big(\partial^if\big)=0, \;\; \forall\ \ m\in\Z, i\in\Z_+,
\end{equation*}
where $f\in\C[t]$ and $f'(t)$ is the derivative of $f$ with respect
to $t$.
\end{definition}

For convenience, we define the following operators 
\begin{equation}\label{operator}
 F(f)=\frac{h(t)-h(\a)}{t-\a}f(t)-f'(t),\quad G(f)=h(\a)f+tF(f),\
 \forall\ f\in\C[t].
\end{equation}
then the module action on $\Omega(\l,h,\a)$ can be rewritten as
\begin{equation}\label{def}
 d_m\big(\partial^if\big)=\l^m(\partial-m)^i\Big(\partial f+m G(f)-m^2\a F(f)\Big),\quad\forall\ m\in\Z, i\in\Z_+, f\in\C[t].
\end{equation}
\begin{theorem}[\cite{CG}]
$\Omega(\l,\a,h)$ is simple if and only if $\deg(h)=1$ and $\alpha\neq
0$.
\end{theorem}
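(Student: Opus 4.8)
The plan is to analyze the module $\Omega(\l,\a,h)=\C[t,\partial]$ by exploiting the $\partial$-filtration and the explicit action formula \eqref{def}. First I would note that for a fixed nonzero $f\in\C[t]$, the action $d_m(\partial^i f)=\l^m(\partial-m)^i\big(\partial f+mG(f)-m^2\a F(f)\big)$ has a clean leading-term structure in $\partial$: the top-degree part in $\partial$ of $d_m(\partial^i f)$ is $\l^m\partial^{i+1}f$ (when $f\neq 0$), which lets one control submodules degree by degree. This is the standard technique for these $\Omega$-type modules.

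For the ``if'' direction ($\deg h=1$ and $\a\neq 0$ imply simplicity), I would take a nonzero submodule $M\subseteq\Omega(\l,\a,h)$ and pick $0\neq v\in M$ of minimal $\partial$-degree, say $v=\sum_{i=0}^k \partial^i f_i$ with $f_k\neq 0$. Applying $d_0$ gives $d_0(\partial^i f_i)=\partial^{i+1} f_i$ (since $m=0$ kills the $G$ and $F$ terms — wait, more carefully $d_0(\partial^i f)=\partial^i(\partial f)=\partial^{i+1}f$), so $M$ is closed under multiplication by $\partial$; combining suitable $d_m$ and $d_0$ one can strip off lower-order terms and show $\C[t]\cdot\partial^0\subseteq$ (a shift of) $M$, i.e. reduce to showing $M$ contains some nonzero polynomial $f(t)$ with zero $\partial$-degree. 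Then with $\deg h=1$, write $h(t)=at+b$ with $a\neq 0$; one computes $F(f)=af(t)-f'(t)$ and $G(f)=bf+t(af-f')=(at+b)f - tf' = h(t)f - tf'$, and from $d_m f = \l^m(\partial f + mG(f) - m^2\a F(f))$ for varying $m$ one extracts $\partial f$, $G(f)$ and $\a F(f)$ separately (three values of $m$, Vandermonde). The key point is that since $\a\neq 0$, the operator $f\mapsto F(f)=af-f'$ together with multiplication structure generates all of $\C[t]$ from any nonzero $f$: applying $F$ repeatedly one can lower degree, eventually reaching a nonzero constant, and then $G$ applied to a constant gives $h(t)$ times that constant, producing a degree-one polynomial, and iterating with $F$ and $G$ (or directly using that $\partial\cdot 1$ is available) one builds up all powers of $t$ and all powers of $\partial$. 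Hence $M=\Omega(\l,\a,h)$.

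For the ``only if'' direction I would show that if $\deg h\neq 1$ or $\a=0$ then there is a proper nonzero submodule. If $\a=0$, inspecting \eqref{def} the term $m^2\a F(f)$ vanishes and one checks that $\partial\C[t,\partial]+$ (something) — more precisely the subspace spanned by elements of positive $\partial$-degree together with $\{g(t): g(0)=\dots\}$, or simply: when $\a=0$, $d_m(\partial^i f) = \l^m(\partial-m)^i(\partial f + mG(f))$ and one exhibits $\partial\cdot\C[t,\partial]$, or the span of all $\partial^i f$ with $i\ge 1$ plus $tF$-image, as an invariant subspace; concretely the classical fact is that $\Omega(\l,0,h)$ has the submodule generated by $\partial$ or has $\C[t,\partial]\partial\oplus(\text{constants issue})$ — I would locate the explicit proper submodule (for $h=0$, $\a=0$ this is the well-known module $\C[\partial]$ with $d_m\partial^i=\l^m(\partial-m)^i\partial$, which contains the proper submodule $\partial\C[\partial]$; the general $\a=0$ case reduces similarly). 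If $\deg h\geq 2$, then $F(f)=\frac{h(t)-h(\a)}{t-\a}f(t)-f'(t)$ raises degree when $\deg h\ge 2$ rather than lowering it, so starting from a high-degree $f$ one cannot reach constants; in fact I expect the submodule generated by $1\in\C[t]$ (or by the constants) to be proper because the action can only increase or preserve certain degree data — I would pin down the precise invariant (likely the ideal generated in $t$-degree or a codimension condition) that makes it proper.

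The main obstacle will be the ``only if'' direction, specifically producing an explicit proper submodule cleanly in the $\deg h\ge 2$ case: one must find a subspace of $\C[t,\partial]$ preserved by all $d_m$, and the natural candidate (the cyclic submodule generated by a single low-degree or high-degree element) requires verifying closure against the $G$ and $F$ operators whose degree behavior is delicate when $\deg h\neq 1$. I would handle $\a=0$ first (easier, essentially classical) and then treat $\deg h\ge 2$, $\a\ne0$ by a degree-counting argument on the $t$-variable showing the cyclic module on $1$ misses high-degree elements, or conversely that there is a quotient; the degree bookkeeping for $F$ and $G$ is where the real care is needed. By contrast the ``if'' direction is routine Vandermonde extraction plus the observation that $F$ strictly lowers $t$-degree when $\deg h=1$, which terminates the induction.
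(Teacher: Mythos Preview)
The paper does not prove this theorem at all: it is quoted verbatim from \cite{CG} with no argument given. So there is nothing to compare your proposal against in the present paper; the proof lives entirely in the cited preprint.

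On your proposal itself: the ``if'' direction is essentially right, but one claim is misstated. When $\deg h=1$, say $h(t)=at+b$ with $a\neq 0$, the operator $F(f)=af-f'$ does \emph{not} strictly lower $t$-degree (it preserves it). What does lower degree is the combination $f'=af-F(f)$, which you can form once both $f$ and $F(f)$ lie in your submodule after the Vandermonde extraction. With that correction your induction to a nonzero constant works, and then $G(c)=h(t)c$ together with $d_0$ lets you rebuild all of $\C[t,\partial]$.

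The genuine gap is the ``only if'' direction. For $\a=0$ you never settle on a proper submodule; the correct one is $t\,\C[t,\partial]$: when $\a=0$ one has $G(f)=h(0)f+tF(f)$, so $G(f)$ vanishes at $t=0$ whenever $f$ does, and since the $m^2\a F(f)$ term is absent, $d_m$ preserves the ideal $t\,\C[t,\partial]$. For $\deg h=0$ (constant $h$), which you do not isolate, the subspace $\C[\partial]$ is invariant because $F$ and $G$ send constants to constants. The hard case is $\deg h\ge 2$ with $\a\neq 0$: here your degree heuristic breaks down, since $F$ and $G$ \emph{raise} $t$-degree, and the cyclic submodule on $1$ is in fact all of $\C[t,\partial]$ (from $1$ you get $F(1)=\frac{h(t)-h(\a)}{t-\a}$ of degree $\deg h-1\ge 1$, then all lower degrees by subtraction, then climb). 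So non-simplicity in this regime cannot come from the submodule generated by $1$ being proper; you need a different invariant subspace, and your proposal does not supply one. This is the step that requires input from \cite{CG}.
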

\begin{definition}
Fix any $\mu \in \C^*=\C\setminus\{0\}$, $\b \in \C.$  Let $\Omega(\mu, \b)=\C[\partial]$ as a vector space
and we define the $\Vir$-module action as follows:
\begin{equation*}
d_k\big(\partial^n\big)=
\mu^k(\partial-k)^n(\partial-\b k), c\partial^n=0,\;\; \forall\ \ k\in\Z, n\in\Z_+.
\end{equation*}
\end{definition}
\begin{theorem}[\cite{LZ1}]
$\Omega(\mu,\b)$ is simple if and only if $\b\neq1.$
\end{theorem}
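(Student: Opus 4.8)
The plan is to treat the two implications separately; the substantial one is simplicity when $\b\neq1$. So assume $\b\neq1$ and let $W$ be a nonzero $\Vir$-submodule of $\Omega(\mu,\b)=\C[\partial]$. Choose $0\neq g\in W$ of degree $n$ in $\partial$, with leading coefficient $a\neq0$. Since $\mu\in\C^*$ and $W$ is a subspace, $P(k):=\mu^{-k}d_kg\in W$ for every $k\in\Z$. Unravelling the action of $d_k$ from the definition, $P(k)$ depends polynomially on $k$ with coefficients in $\C[\partial]$: the factor built from $g$ contributes degree $n$ in $k$, and the remaining linear factor contributes one more, so $P$ has degree $n+1$ in $k$ and its $k^{n+1}$-coefficient is a nonzero scalar multiple of $a$, i.e.\ a nonzero constant polynomial in $\partial$. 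It is precisely here that $\b\neq1$ enters: this is the condition ensuring that the top coefficient does not degenerate.

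Next comes a Vandermonde (finite-difference) extraction: $k\mapsto P(k)$ is a polynomial map of degree $n+1$ valued in the finite-dimensional space of polynomials in $\partial$ of degree $\leq n+1$, so each of its coefficients is a fixed $\C$-linear combination of the $n+2$ values $P(0),P(1),\dots,P(n+1)$, all of which lie in $W$. Hence the $k^{n+1}$-coefficient lies in $W$, so $W$ contains a nonzero constant; rescaling, $1\in W$. Since $d_0$ acts on $\Omega(\mu,\b)$ simply as multiplication by $\partial$ — indeed $d_0(\partial^i)=\partial^{i+1}$ — we get $\partial^i=d_0^{\,i}(1)\in W$ for all $i\geq0$, whence $W=\C[\partial]$. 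As $W$ was an arbitrary nonzero submodule, $\Omega(\mu,\b)$ is simple.

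For the converse, suppose $\b=1$. A direct check from the module action shows that the subspace $\partial\,\C[\partial]$ of polynomials with vanishing constant term is stable under every $d_k$ ($k\in\Z$) and under $c$; being nonzero and proper, it exhibits $\Omega(\mu,1)$ as reducible.

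The step I expect to be the main obstacle is the computation in the middle: writing $P(k)=\mu^{-k}d_kg$ explicitly as a polynomial in $k$, pinning down its leading coefficient, and verifying that this coefficient is a nonzero constant exactly when $\b\neq1$. Everything else — the Vandermonde extraction, the identification of $d_0$ with multiplication by $\partial$, and the triviality of the $c$-action — is routine.
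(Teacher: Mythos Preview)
The paper does not prove this statement; it merely quotes it from \cite{LZ1}. So there is nothing to compare your argument against, and the question reduces to whether your proof is correct. It is not, and the errors are concrete.

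In the forward direction you assert that the $k^{n+1}$-coefficient of $P(k)=\mu^{-k}d_kg$ is a nonzero constant precisely when $\b\neq1$. Carry the computation out: writing $g=\sum_{i=0}^n a_i\partial^i$ with $a_n\neq0$, one has
\[
P(k)=\sum_{i=0}^n a_i(\partial-k)^i(\partial-\b k),
\]
and only the $i=n$ summand contributes to $k^{n+1}$; its contribution is $a_n(-k)^n(-\b k)=(-1)^{n+1}\b\,a_n\,k^{n+1}$. Thus the top coefficient is $(-1)^{n+1}\b\,a_n$, which is a nonzero constant if and only if $\b\neq0$, \emph{not} $\b\neq1$. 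Your Vandermonde extraction and the use of $d_0$ as multiplication by $\partial$ are fine; it is the identification of the critical parameter value that is wrong.

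In the converse direction you claim that $\partial\,\C[\partial]$ is $d_k$-stable when $\b=1$. It is not: for $\b=1$ and $k\neq0$,
\[
d_k(\partial)=\mu^k(\partial-k)(\partial-k)=\mu^k(\partial^2-2k\partial+k^2),
\]
whose constant term $\mu^kk^2$ is nonzero. More generally, $d_k(\partial^n)=\mu^k(\partial-k)^n(\partial-\b k)$ has constant term $\mu^k(-1)^{n+1}\b\,k^{n+1}$, so $\partial\,\C[\partial]$ is a submodule exactly when $\b=0$.

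In short, your method is entirely sound --- it is the standard argument --- but when executed it proves that $\Omega(\mu,\b)$ is simple if and only if $\b\neq0$. This strongly suggests that the statement in the paper contains a typo (consistent with the fact that elsewhere the paper always assumes $\b_j\in\C^*$). Either way, your two hand-waved ``direct checks'' are precisely the places where the proof goes wrong, and both should be written out.
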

Let $U :=U(\Vir)$ be the universal enveloping algebra of the
Virasoro algebra $\Vir.$ For any $\theta,h\in \mathbb{C},$ let
$I(\theta,h)$ be the left ideal of $U$ generated by the set
$$ \{d_{i}| i>0\}\bigcup \{d_{0}-h\cdot1, c-\theta \cdot 1\}.$$
The Verma module with highest weight $(\theta,h)$ for $\Vir$ is
defined as the quotient module $\bar{V}(\theta,h) :=U/I(\theta,h).$ It is a
highest weight module of $\Vir$ and has a basis consisting of all
vectors of the form
$$ d^{k_{-1}}_{-1}d^{k_{-2}}_{-2}\cdots d^{k_{-n}}_{-n}v_{h}; k_{-1},k_{-2},\cdots ,k_{-n}\in \mathbb{Z}_{+},n\in \mathbb{N},$$
where $v_{h} = 1+I(\theta,h).$ Each nonzero scalar multiple of
$v_{h}$ is called a highest weight vector of the Verma module. Then
we have the irreducible highest weight module
$V(\theta,h)=\bar{V}(\theta,h)/J,$ where $J$ is the maximal proper
submodule of $\bar{V}(\theta,h).$ For the structure of
$V(\theta,h),$ refer to \cite{FF}.

Denote by $\Vir_{+}$ the Lie subalgebra of $\Vir$ spanned by all
$d_{i}$ with $i\geq 0.$ For $n\in \mathbb{Z}_{+},$ denote by
$\Vir^{(n)}_{+}$ the Lie subalgebra of $\Vir$ generated by all
$d_{i}$ for $i>n.$ For any $\Vir_{+}$ module $N$ and $\theta \in
\mathbb{C},$ consider the induced module
$\Ind(N):=U(\Vir)\otimes_{U(\Vir^{+})}N,$ and denote by
Ind$_{\theta}(N)$ the module Ind$(N)/(c-\theta)\text{Ind}(N).$ These modules are used to give a characterization of the irreducible
$\Vir$-modules such that the action of $d_k$ are locally finite for
sufficiently large $k$.

\begin{theorem}[\cite{MZ2}]\label{MZ2-1}
Assume that $N$ is an irreducible $\Vir_{+}$-module such that there
exists $k\in \N$ satisfying the following two conditions:
\begin{itemize}
\item[(a)] $d_{k}$ acts injectively on $N$;
\item[(b)] $d_{i}N=0$ for all $i>k$.
\end{itemize}
Then for any $\theta\in\C$ the $\Vir $ module $\Ind_{\theta}(N)$ is
simple.
\end{theorem}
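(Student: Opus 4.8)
The plan is to follow the standard induced-module strategy: show that any nonzero submodule $W$ of $\Ind_\theta(N)$ must contain the top component $1\otimes N$, and then use irreducibility of $N$ as a $\Vir_+$-module to conclude $W=\Ind_\theta(N)$. First I would fix a PBW-type basis: by the PBW theorem, $\Ind_\theta(N)$ has a basis consisting of elements $d_{-i_1}^{m_1}d_{-i_2}^{m_2}\cdots d_{-i_r}^{m_r}\otimes v$ with $0<i_1<i_2<\cdots<i_r$, $m_j\in\N$, and $v$ running over a basis of $N$ (the central element $c$ having been specialized to $\theta$). This gives a $\Z_+$-grading by \emph{total negative degree} $\sum_j m_j i_j$; call the degree-zero part $\Ind_\theta(N)_0 = 1\otimes N \cong N$.

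The core of the argument is a ``lowering'' lemma: given any nonzero $w\in\Ind_\theta(N)$ of positive total negative degree, one can apply a suitable $d_s$ with $s>0$ to strictly decrease the degree while staying nonzero. To make this work one orders the monomials appropriately — say, first by total negative degree, then lexicographically by the exponent tuple — and picks the maximal monomial $d_{-i_1}^{m_1}\cdots d_{-i_r}^{m_r}\otimes v$ appearing in $w$. Acting by $d_{i_1}$ and analyzing the leading term via the commutation relations $[d_{i_1},d_{-i_1}]=-2i_1 d_0 + \frac{i_1^3-i_1}{12}\theta$ and $[d_{i_1},d_{-i_j}] = (-i_j-i_1)d_{i_1-i_j}$, one checks that the top piece is transformed, up to a nonzero scalar, into $d_{-i_1}^{m_1-1}d_{-i_2}^{m_2}\cdots d_{-i_r}^{m_r}\otimes (d_0\,\text{-ish})v$ plus terms of strictly smaller total negative degree or smaller in the lexicographic order; here conditions (a) and (b) on $N$ are what guarantee the relevant action on $v\in N$ is injective (after passing to large enough index the operator $d_k$ acts injectively, and all higher $d_i$ kill $N$, so no cancellation can wipe out the leading term). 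Iterating, one drives $w$ down to a nonzero element of $1\otimes N$, hence $W\cap(1\otimes N)\neq 0$.

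Once $W\cap(1\otimes N)$ is a nonzero $\Vir_+$-submodule of $N$ — it is $\Vir_+$-stable because, for $i\ge 0$, $d_i(1\otimes N)\subseteq 1\otimes N$ is immediate from the module structure — irreducibility of $N$ forces $W\supseteq 1\otimes N$. Then $W = U(\Vir)\cdot(1\otimes N) = \Ind_\theta(N)$ by construction, so $\Ind_\theta(N)$ is simple. The main obstacle is the bookkeeping in the lowering lemma: one must choose the monomial ordering carefully and verify that applying $d_{i_1}$ (or, more robustly, iterating with $d_k$ for the distinguished large $k$ from the hypothesis, where injectivity and the vanishing $d_iN=0$ for $i>k$ are cleanest) genuinely produces a nonzero term of strictly lower degree and that no other monomial of equal degree can cancel it — this is where hypotheses (a) and (b) enter essentially, and getting the induction invariant right is the crux.
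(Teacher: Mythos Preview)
The paper does not contain a proof of this theorem: it is quoted verbatim from \cite{MZ2} (Mazorchuk--Zhao) as background material in the preliminaries section, with no argument given. So there is nothing in the present paper to compare your proposal against.

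That said, your sketch is the correct strategy and is essentially the one used in \cite{MZ2}: endow $\Ind_\theta(N)$ with a $\Z_+$-filtration by total negative PBW degree, show that any nonzero submodule $W$ meets the degree-zero piece $1\otimes N$, and conclude by irreducibility of $N$. Your identification of the crux --- that hypotheses (a) and (b) are exactly what is needed to guarantee the ``lowering'' step produces a nonzero element of strictly smaller degree --- is right. One small refinement worth noting: rather than acting by $d_{i_1}$ (the smallest negative index), the cleanest bookkeeping in \cite{MZ2} comes from acting by $d_{k+i_r}$, where $i_r$ is the \emph{largest} negative index appearing in the leading monomial; then condition (b) kills most of the commutator terms outright (since they land in $d_{>k}N=0$), and the one surviving contribution involves $d_k$ acting on $N$, which is injective by (a). This choice makes the non-cancellation of the leading term transparent and avoids the $d_0$-eigenvalue complications your $d_{i_1}$ approach would run into.
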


\begin{theorem}[\cite{MZ2}]\label{MZ2-2}
Let $V$ be an irreducible $\Vir$ module. Then the following
conditions are equivalent:
\begin{itemize}
\item[(1)] There exists $k\in\N$ such that $V$ is a locally finite $\Vir^{(k)}_{+}$-module;
 \item[(2)] There exists $n\in\N$ such that $V$ is a locally nilpotent $\Vir^{(n)}_{+}$-module;
\item[(3)] Either $V$ is a highest weight module or $V\cong \Ind_{\theta}(N)$ for some $\theta\in \C$, $k\in \N$ and an irreducible $\Vir_{+}$-module $N$
satisfying the conditions $(a)$ and $(b)$ in Theorem \ref{MZ2-1}.
\end{itemize}
\end{theorem}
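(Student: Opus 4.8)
I would prove the equivalence in the cycle $(3)\Rightarrow(2)\Rightarrow(1)\Rightarrow(3)$, with the last implication carrying essentially all of the content. The implication $(2)\Rightarrow(1)$ is immediate, since a locally nilpotent module is in particular locally finite. For $(3)\Rightarrow(2)$ I would argue by cases. If $V=V(\theta,h)$, then $V=\bigoplus_{n\ge 0}V_{h-n}$ with each $V_{h-n}$ finite dimensional, and since each $d_i$ with $i\ge1$ raises the $d_0$-eigenvalue by $i$ while the $d_0$-spectrum of $V$ is bounded above, every vector is annihilated by all sufficiently long products of elements of $\Vir^{(0)}_{+}$, so $V$ is locally nilpotent over $\Vir^{(0)}_{+}$. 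If $V\cong\Ind_{\theta}(N)$, I would identify $\Ind_{\theta}(N)$ with $U(\spn\{d_{-i}\mid i\ge1\})\otimes N$ via the triangular decomposition and attach to a standard monomial $d_{-j_1}\cdots d_{-j_l}\otimes n$ the positive integer $l+\sum_a j_a$; using $[d_i,d_{-j}]=(-j-i)d_{i-j}$ together with $d_iN=0$ for $i>k$, one checks that every $d_i$ with $i>k$ carries such a monomial to a combination of monomials with strictly smaller attached integer (or to $0$), whence $\Vir^{(k)}_{+}$ acts locally nilpotently.

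For $(1)\Rightarrow(3)$, let $V$ be simple with $c$ acting by the scalar $\theta$ and with $\mathfrak n:=\Vir^{(k)}_{+}=\spn\{d_i\mid i>k\}$ acting locally finitely. \emph{Step 1: produce a nonzero vector killed by all large $d_i$.} Pick $0\ne u\in V$ with $\dim U(\mathfrak n)u$ minimal; then $U(\mathfrak n)u$ is a finite-dimensional simple $\mathfrak n$-module. Now $\mathfrak n$ has no nonzero finite-dimensional semisimple quotient: if $q\colon\mathfrak n\twoheadrightarrow\mathfrak s$ were one, pulling back the Killing form of $\mathfrak s$ would give an invariant symmetric form $B$ on $\mathfrak n$, and the invariance identity $(j-i)B(d_{i+j},d_e)=(e-j)B(d_i,d_{j+e})$ together with symmetry of $B$ forces $B$ to vanish on $\spn\{d_i\mid i\ge L\}$ for all large $L$, contradicting nondegeneracy of the Killing form of $\mathfrak s$. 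Hence by Lie's theorem every finite-dimensional simple $\mathfrak n$-module is one dimensional, so $U(\mathfrak n)u$ contains a one-dimensional $\mathfrak n$-submodule $\C v$; since $[\mathfrak n,\mathfrak n]=\spn\{d_i\mid i\ge 2k+3\}$, this gives $d_iv=0$ for all $i\ge 2k+3$.

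\emph{Step 2: the top space.} Let $r$ be the least integer with $N_r:=\{v\in V\mid d_iv=0\text{ for all }i>r\}\ne 0$ (this exists unless $V$ is the one-dimensional trivial module, for which (3) holds trivially; by Step 1, $r\le 2k+2$). The bracket relations show $N_r$ is $\Vir_{+}$-stable, $d_r$ acts injectively on $N_r$ (if $d_rw=0$ for $0\ne w\in N_r$ then $w\in N_{r-1}=0$), and $d_iN_r=0$ for $i>r$. If $r<0$, every $0\ne v\in N_r$ is killed by all of $\Vir_{+}$ and is thus a highest weight vector, so $V$ is a highest weight module. If $r\ge0$, consider the canonical $\Vir$-map $\phi\colon\Ind_{\theta}(N_r)\to V$ extending $N_r\hookrightarrow V$; it is surjective because its image is a nonzero submodule of the simple module $V$. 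The heart of the proof is the claim that $\ker\phi=0$ (equivalently, that every nonzero $\Vir$-submodule of $\Ind_{\theta}(N_r)$ meets the canonical copy of $N_r$); granting it, $V\cong\Ind_{\theta}(N_r)$, so $\Ind_{\theta}(N_r)$ is simple, so $N_r$ is a simple $\Vir_{+}$-module (a proper nonzero submodule $N'\subsetneq N_r$ would give the proper nonzero submodule $\Ind_{\theta}(N')$ of $\Ind_{\theta}(N_r)$). For $r\ge1$ this is the second case of (3), with $N_r$ satisfying (a) and (b) of Theorem \ref{MZ2-1} for the integer $r$; for $r=0$, $N_0$ is a simple module over $\C d_0$, hence one dimensional with a highest weight vector, so $V$ is again a highest weight module.

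It remains to prove the claim $\ker\phi=0$, and this is where I expect the real difficulty. I would argue by induction on the PBW-filtration degree $s$ of a nonzero element $x\in\Ind_{\theta}(N_r)=U(\spn\{d_{-i}\mid i\ge1\})\otimes N_r$, the case $s=0$ being trivial. For $s\ge1$ one examines the leading symbol of $x$ in the degree-$s$ part of the associated graded module and—crucially using that $d_r$ acts \emph{injectively} on $N_r$—chooses an index $m>r$ (matched to the largest-index factor $d_{-j}$ occurring in the leading symbol, so that the surviving contribution of $d_m$ is routed through $d_r$ rather than through an operator that annihilates $N_r$) for which $d_mx$ is nonzero of filtration degree $s-1$; then $d_mx\in U(\Vir)x$ and induction applies. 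Making this choice of $m$ uniform over the leading symbol and verifying that the degree-lowering part of the $d_m$-action is injective on leading symbols is the delicate point, and it is precisely where hypothesis (a) of Theorem \ref{MZ2-1} is needed. Once this is established, $(3)\Rightarrow(2)$ closes the cycle.
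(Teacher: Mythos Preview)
The paper does not prove this theorem at all: it is quoted verbatim from \cite{MZ2} (Mazorchuk--Zhao) as background, with no argument supplied. So there is no ``paper's own proof'' against which to compare your attempt; you have in effect reconstructed a proof of a cited result.

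That said, your outline is broadly in the spirit of the original \cite{MZ2} argument. A few remarks on the details. In Step~1, the passage ``hence by Lie's theorem every finite-dimensional simple $\mathfrak n$-module is one dimensional'' is compressed: Lie's theorem applies to solvable Lie algebras, and $\mathfrak n=\Vir^{(k)}_{+}$ is not solvable. What you really need (and what your Killing-form sketch is driving at) is that the \emph{image} of $\mathfrak n$ in any finite-dimensional representation is solvable; the cleanest way to see this is to observe that any ideal $I\subseteq\mathfrak n$ of finite codimension contains $\spn\{d_i\mid i\ge L\}$ for some $L$, so that $\mathfrak n/I$ is a quotient of the nilpotent algebra $\mathfrak n/\spn\{d_i\mid i\ge L\}$. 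Your invariant-form argument can be made to yield this, but as written it conflates nondegeneracy of the Killing form on $\mathfrak s$ with nondegeneracy of its pullback to $\mathfrak n$.

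For the heart of the matter, the injectivity claim $\ker\phi=0$, you have correctly isolated the issue: one needs that every nonzero $\Vir$-submodule of $\Ind_{\theta}(N_r)$ meets $N_r$, using only that $d_r$ is injective on $N_r$ and $d_iN_r=0$ for $i>r$ (not that $N_r$ is simple). This is exactly the argument behind Theorem~\ref{MZ2-1}, and your PBW-filtration sketch is the standard route; the ``delicate point'' you flag---choosing $m$ so that $d_m$ genuinely lowers filtration degree without killing the leading symbol---is indeed where the injectivity hypothesis enters, and it does go through.
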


In the rest of the paper, we will always fix some $\Vir$-module
$\bigotimes_{i=1}^{m}\Omega(\l_{i},\a_{i},h_{i})\bigotimes_{j=1}^{n}\Omega(\mu_j,$
$\b_j)\bigotimes V$ where $\l_{i},\a_{i},\mu_j,\b_j\in\C^{*}$ and $h_{i}\in\C[t]$, and an
irreducible $\Vir$-module $V$ such that each $d_{k}$ is locally
finite (equivalently, locally nilpotent) on $V$ for any positive
integer $k$ large enough. From Theorem 2 in \cite{MZ2} we know that
$V$ has to be $V(\theta,h)$ for some $\theta,h\in \mathbb{C}$ or
Ind$_{\theta}(N)$ defined in \cite{MZ2}.

\section{Irreducible module $(\bigotimes_{i=1}^m\Omega(\lambda_i,\alpha_i,h_i)\bigotimes_{j=1}^{n}\Omega(\mu_{j},\b_{j}))\otimes V$}

In this section we will investigate the structure of the Virasoro
module $(\bigotimes_{i=1}^{m}\Omega(\l_{i},\a_{i},h_{i})$
$\bigotimes_{j=1}^{n}\Omega(\mu_{j},\b_{j}))\otimes V$
and its irreducibility.

Firstly any $f\in(\bigotimes_{i=1}^{m}\Omega(\l_{i},\a_{i},h_{i})$
$\bigotimes_{j=1}^{n}\Omega(\mu_{j},\b_{j}))\otimes V$ can
be described as a finite nonzero sum
\begin{equation}
f=\sum_{(r,p)\in S}\partial_{1}^{r_1}t_{1}^{p_1}\otimes\cdots\otimes\partial_{m}^{r_m}t_{m}^{p_m}\otimes\partial_{m+1}^{r_{m+1}}\cdots\otimes\partial_{m+n}^{r_{m+n}}\otimes v_{(r,p)}
\end{equation}
 denoted by
\begin{equation}
f=\sum_{(r,p)\in S}\partial_{1}^{r_1}t_{1}^{p_1}\cdots\partial_{m}^{r_m}t_{m}^{p_m}\partial_{m+1}^{r_{m+1}}\cdots\partial_{m+n}^{r_{m+n}}\otimes v_{(r,p)}
\end{equation}
where $(r,p)=(r_{1},r_{2},\cdots,r_{m},p_1,p_2,\cdots\cdots,p_m,r_{m+1},r_{m+2},\cdots,r_{m+n}), S\subset \Z_+^{2m+n}$ is finite and each $v_{(r,p)}\in V\backslash\{0\}$.
In order to study the module well, we should make some preparation.
 Denote
\begin{equation*}
\begin{split}
& X=\{\partial_{1}^{r_1}t_{1}^{p_1}\otimes\cdots\otimes\partial_{m}^{r_m}t_{m}^{p_{m}}\otimes\partial_{m+1}^{r_{m+1}}\otimes\cdots
\otimes\partial_{m+n}^{r_{m+n}}\otimes v_{(r,p)}:
r=(r_1,r_2,\cdots,r_{m+n})\in \Z_{+}^{m+n},\\
&\hskip1cm p=(p_1,p_2,\cdots,p_m)\in \Z_{+}^{m},v_{(r,p)}\in V\},\\
\end{split}
\end{equation*}
we can define a total order on X as follows.
$$\partial_{1}^{r_1}t_{1}^{p_1}\otimes\cdots\otimes\partial_{m}^{r_m}t_{m}^{p_{m}}\otimes\partial_{m+1}^{r_{m+1}}\otimes\cdots\otimes
\partial_{m+n}^{r_{m+n}}\otimes v_{(r,p)}<\partial_{1}^{l_1}t_{1}^{q_1}\otimes\cdots\otimes\partial_{m}^{l_m}t_{m}^{q_{m}}\otimes\partial_{l+1}^{l_{m+1}}\otimes\cdots
\otimes\partial_{l+n}^{l_{m+n}}\otimes v_{(l,q)}$$
if and only if\\
$(r_{1},r_{2},\cdots,r_{m},p_1,p_2,\cdots p_m,r_{m+1},\cdots,r_{m+n})$ and $(l_{1},l_{2},\cdots,l_{m},q_1,q_2,\cdots,q_m,l_{m+1},\cdots,l_{m+n})$
satisfy one of the following three conditions:
\begin{itemize}
\item[(1)] $\exists k\in \Z_+, r_{k}<l_{k}, \text{and}\quad\forall\ i<k, r_{i}=l_{i}$, where $1\leq k\leq m ;$
\item[(2)] $ 1\leq i\leq m,r_{i}=l_{i},\exists k\in \Z_+, p_{k}<q_{k}, \text{and}\quad\forall i<k, p_{i}=q_{i}$, where $1\leq k\leq m;$
\item[(3)] $1\leq i\leq m,r_{i}=l_{i}, p_{i}=q_{i},\exists k\in \Z_+, r_{m+k}<l_{m+k}, \text{and}\quad\forall\ j<k, r_{m+j}=l_{m+j},$
          \hskip6cm where $1\leq k\leq n ;$
\end{itemize}
Then we define the degree of $f$ $\deg(f)=(r,p)$, where
$\partial_{1}^{r_1}t_{1}^{p_1}\otimes\cdots\otimes\partial_{m}^{r_m}t_{m}^{p_m}\otimes\partial_{m+1}^{r_{m+1}}\otimes\cdots\otimes
\partial_{m+n}^{r_{m+n}}\otimes
v_{(r,p)}$ is the term with maximal order in the sum. Note that $\deg(1 \otimes\cdots\otimes1\otimes
v_{(0,0)})=\mathbf{0}=(0,0,\cdots,0).$

Next let us show two important Lemmas. The second
following technique lemma is similar to Proposition 3.2 of
\cite{CG}.
\begin{lemma} [\cite{TZ2}]\label{1}
Let $\l_1,\l_2,\cdots,\l_m\in\C, s_1,s_2,\cdots,s_m\in \N$ with $s_1+s_2+\cdots+s_m=s.$
Define a sequence of functions on $\Z$ as follows: $f_1(n)=\l_{1}^{n},f_2(n)=n\l_{1}^{n},\cdots,
f_{s_1}(n)=n^{s_{1}-1}\l_{1}^{n},f_{s_{1}+1}(n)=\l_{2}^{n},\cdots,f_{s_1+s_2}(n)=n^{s_2-1}\l_{2}^{n},\cdots,
f_s(n)=n^{s_m-1}\l_{m}^{n}.$ Let $x=(y_{pq})$ be the $s\times s$ matrix with $y_{pq}=f_{q}(p-1),
q=1,2,\cdots,s,p=r+1,r+2,\cdots,r+s$ where $r\in\Z_+$. Then
$$ det(x)=\prod_{j=1}^{m}(s_j-1)!!\l_{j}^{s_{j}(s_{j}+2r-1)/2}\prod_{1\leq i<j\leq m}(\l_{j}-\l_{i})^{s_is_j},$$
where $s_j!!=s_j!\times (s_j-1)!\times\cdots\times2!\times1!$ with
$0!!=1$, for convenience.
\end{lemma}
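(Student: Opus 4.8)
The matrix $x=(y_{pq})$ is a confluent (Hermite‑type) Vandermonde matrix: its columns fall into $m$ blocks, the block of $\lambda_j$ having $s_j$ columns whose entry in the row indexed by $n$ is $n^{k}\lambda_j^{\,n}$ for $k=0,1,\dots,s_j-1$, the rows running over $n=r,r+1,\dots,r+s-1$. The plan is to normalize it (getting rid of the offset $r$), evaluate the normalized determinant by degenerating an ordinary Vandermonde determinant, and then keep track of the constants and the powers of the $\lambda_j$.

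First I would remove the offset. Writing $n=r+\ell$ with $\ell=0,\dots,s-1$, the entry of $x$ in the $k$-th column of the $j$-th block becomes $(\ell+r)^{k}\lambda_j^{\ell+r}=\lambda_j^{\,r}\,(\ell+r)^{k}\lambda_j^{\,\ell}$. Pulling $\lambda_j^{\,r}$ out of each of the $s_j$ columns of that block contributes a scalar $\prod_{j=1}^{m}\lambda_j^{\,rs_j}$, and since $(\ell+r)^{k}=\ell^{k}+(\text{terms of lower degree in }\ell)$, a unipotent column operation inside each block replaces every $(\ell+r)^{k}$ by $\ell^{k}$ without changing the determinant. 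Hence it suffices to prove that the $s\times s$ matrix $M$ whose $\bigl(\ell,(j,k)\bigr)$-entry is $\ell^{k}\lambda_j^{\,\ell}$ (with $\ell=0,\dots,s-1$, $1\le j\le m$, $0\le k\le s_j-1$) satisfies
$\det M=\prod_{j=1}^{m}(s_j-1)!!\,\lambda_j^{\,s_j(s_j-1)/2}\prod_{1\le i<j\le m}(\lambda_j-\lambda_i)^{s_is_j}$; multiplying back the extracted $\prod_j\lambda_j^{\,rs_j}$ converts the exponent $s_j(s_j-1)/2$ into the exponent $s_j(2r+s_j-1)/2$ of the statement.

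To compute $\det M$ I would exhibit it as a degeneration of an ordinary Vandermonde determinant. Introduce an independent variable $\mu_q$ for every column and put $W(\mu_1,\dots,\mu_s)=\det\!\bigl(\mu_q^{\,\ell}\bigr)_{\ell,q}=\prod_{q<q'}(\mu_{q'}-\mu_q)$. Since $\ell^{k}\mu^{\ell}=(\mu\,\partial_\mu)^{k}\mu^{\ell}$, multilinearity of the determinant in its columns shows that $M$ is obtained from $(\mu_q^{\,\ell})$ by applying $(\mu_q\partial_{\mu_q})^{k}$ to the $k$-th column of the $j$-th block and then specializing $\mu_q=\lambda_j$ throughout that block; therefore
\[
\det M=\Bigl[\ \prod_{j=1}^{m}\ \prod_{k=0}^{s_j-1}(\mu_{j,k}\partial_{\mu_{j,k}})^{k}\Bigr]\ \prod_{q<q'}(\mu_{q'}-\mu_q)\ \Bigm|_{\ \mu_{j,k}=\lambda_j\ \forall j,k}.
\]
The differential operator has order $\sum_{j}\binom{s_j}{2}$, which is exactly the order to which $W$ vanishes under the within-block collisions $\mu_{j,0}=\dots=\mu_{j,s_j-1}$, the cross-block factors $\mu_{j,k'}-\mu_{i,k}$ (for $i<j$) surviving the specialization intact as $\prod_{i<j}(\lambda_j-\lambda_i)^{s_is_j}$. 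Consequently, after specialization only the contribution survives in which, for each $j$, the whole operator of the $j$-th block acts on the within-block Vandermonde factor $\Delta_j:=\prod_{0\le k<k'\le s_j-1}(\mu_{j,k'}-\mu_{j,k})$, every occurrence of $\mu_{j,k}\partial_{\mu_{j,k}}$ being reduced to $\lambda_j\partial_{\mu_{j,k}}$; this yields an overall factor $\prod_j\lambda_j^{\binom{s_j}{2}}$ and leaves, node by node, the single-node identity
\[
\Bigl[\ \prod_{k=0}^{t-1}\partial_{x_k}^{\,k}\ \prod_{0\le k<k'\le t-1}(x_{k'}-x_k)\Bigr]\Bigm|_{x_0=\dots=x_{t-1}}\ =\ \prod_{k=0}^{t-1}k!\ =\ (t-1)!!,
\]
which follows by writing $\prod_{k<k'}(x_{k'}-x_k)=\det(x_k^{\,i})_{0\le i,k\le t-1}$, differentiating the $k$-th column to get $\det\!\bigl(i^{\underline{k}}x_k^{\,i-k}\bigr)$, pulling out row and column factors, and noting that after the specialization the resulting integer matrix $(i^{\underline{k}})$ is lower triangular with diagonal entries $i^{\underline{i}}=i!$. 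Assembling these pieces gives $\det M$, and hence the lemma.

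The step that needs care is the passage to the coincidence limit: one must check that every term of the Leibniz expansion of the operator in which some derivative falls on a surviving cross-block factor (or in which fewer than $k$ of the derivatives carried by $(\mu\,\partial_\mu)^{k}$ are actually used) vanishes upon specialization, since then the remaining derivatives no longer reach the full order of vanishing of some $\Delta_j$ under $\mu_{j,\bullet}\mapsto\lambda_j$. An alternative that sidesteps differential operators is induction on the number $m$ of distinct nodes: viewing $\det x$ as a polynomial in $\lambda_m$, a Laplace expansion along the $\lambda_m$-block pins down its $\lambda_m$-degree and shows it is divisible by $(\lambda_m-\lambda_i)^{s_is_m}$ for each $i<m$, and then matching the coefficient of the lowest power of $\lambda_m$ against the inductive hypothesis (applied with the row offset increased by $s_m$, which is where the $r$-dependence of the formula is used) fixes the remaining constant; in that route the only fiddly point is the bookkeeping of the Laplace signs.
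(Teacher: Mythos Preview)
The paper does not prove this lemma at all; it is quoted verbatim from \cite{TZ2} and used as a black box. Your write-up is therefore a self-contained proof rather than a comparison, and it is essentially correct. The reduction to $r=0$ by extracting $\prod_j\lambda_j^{rs_j}$ and then performing unipotent column operations within each block is clean, and the remaining determinant is the classical confluent Vandermonde determinant, for which your degeneration-of-Vandermonde argument is standard.

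The only place where your exposition is genuinely sketchy is the coincidence-limit step, as you yourself flag. The replacement of $(\mu_{j,k}\partial_{\mu_{j,k}})^{k}$ by $\lambda_j^{k}\partial_{\mu_{j,k}}^{k}$ is justified not literally ``occurrence by occurrence'' but because $(\mu\partial_\mu)^{k}=\mu^{k}\partial_\mu^{k}+(\text{terms of order}<k\text{ in }\partial_\mu)$, and only the top-order piece survives: the within-block factor $\Delta_j$ vanishes to order exactly $\binom{s_j}{2}=\sum_{k=0}^{s_j-1}k$ at the coincidence point, so any term in which even one derivative is ``wasted'' on a lower-order piece of some $(\mu\partial_\mu)^{k}$, or on a cross-block factor, leaves too few derivatives to kill the zero of $\Delta_j$ and hence vanishes upon specialization. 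Making that counting explicit closes the gap. Your alternative inductive route via Laplace expansion along the $\lambda_m$-block is also a standard way to handle confluent Vandermonde determinants and would work equally well.
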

\begin{proposition}\label{technique}
Let $W$ be a subspace of
$(\bigotimes_{i=1}^{m}\Omega(\l_{i},\a_{i},h_{i})\bigotimes_{j=1}^{n}\Omega(\mu_{j},\b_{j}))\otimes V$ which
is stable under the action of any $d_k$ for $k$ sufficiently large.
Take any $f=\sum\limits_{(r,p)\in
S}\partial_{1}^{r_1}t_{1}^{p_1}\otimes\cdots\otimes\partial_{m}^{r_m}t_{m}^{p_m}\otimes\partial_{m+1}^{r_{m+1}}\otimes\cdots\otimes\partial_{m+n}^{r_{m+n}}\otimes
v_{(r,p)}\in W$ for some $v_{(r,p)}\in V$, then for any $0\leq s\leq
\text{max}\{r_{i}+2\}, 1\leq i \leq m$ and $0\leq l\leq
\text{max}\{r_{m+j}+1\}, 1\leq j \leq n,$
we have
\begin{equation}\label{j=general}
\begin{split}
& \hskip-0.5cm\sum_{(r,p)\in S}\sum_{i=1}^m\partial_{1}^{r_1}t_{1}^{p_1}\otimes\cdots\otimes
  \Bigg(\binom{r_i}{s}\partial_{i}^{r_{i}-s+1}t_{i}^{p_i}-\binom{r_i}{s-1}\partial_{i}^{r_{i}-s+1}G_{i}(t_{i}^{p_i})-\\
& \hskip1cm \binom{r_i}{s-2}\partial_{i}^{r_{i}-s+2}\a_{i}
  F_{i}(t_{i}^{p_i})\Bigg)\otimes\cdots\otimes t_{m}^{p_m}\partial_{m}^{r_{m}}\otimes \partial_{m+1}^{r_{m+1}}\otimes\cdots\otimes\partial_{m+n}^{r_{m+n}}\otimes v_{(r,p)}\in W,\\
 \end{split}
\end{equation}
and
\begin{equation}\label{j=general}
\begin{split}
& \sum_{(r,p)\in S}\sum_{j=1}^n\partial_{1}^{r_1}t_{1}^{p_1}\otimes\cdots\otimes
   \partial_{m}^{r_{m}}t_{m}^{p_m}\otimes \partial_{m+1}^{r_{m+1}}\otimes\\
&\hskip1cm \cdots\otimes\Bigg(\binom{r_{m+j}}{l}-\binom{r_{m+j}}{l-1}\b_j\Bigg)\partial_{m+j}^{r_{m+j}-l+1}
   \otimes\cdots\otimes\partial_{m+n}^{r_{m+n}}\otimes v_{(r,p)}\in W,\\
\end{split}
\end{equation}
where
$G\big(t_{i}^{p_i}\big)=t_{i}F\big(t_{i}^{p_i}\big)+h_{i}(\a_{i})t_{i}^{p_i}$
and we make the convention that $\binom{0}{0}=1$ and
$\binom{r_{i}}{j}=0$ whenever $j>r_{i}$ or $j<0$. In particular, we
have
\begin{enumerate}
\item \label{s=0}
when $s=0$, we have
$$\sum\limits_{(r,p)\in
S}\partial_{1}^{r_1}t_{1}^{p_1}\otimes\cdots\otimes\partial_{i}^{r_i+1}t_{i}^{p_i}\otimes\cdots\otimes\partial_{m}^{r_m}t_{m}^{p_m}\otimes
\partial_{m+1}^{r_{m+1}}\otimes\cdots\otimes\partial_{m+n}^{r_{m+n}}\otimes
v_{(r,p)}\in W, i=1,\cdots,m;$$
%
%
%
\item \label{l=0}
when $l=0$, we have $$\sum\limits_{(r,p)\in
S}\partial_{1}^{r_1}t_{1}^{p_1}\otimes\cdots\otimes\partial_{m}^{r_m}t_{m}^{p_m}
\otimes\partial_{m+1}^{r_{m+1}}\otimes\cdots\otimes\partial_{m+j}^{r_{m+j}+1}\otimes\cdots\otimes\partial_{m+n}^{r_{m+n}}\otimes
v_{(r,p)}\in W, j=1,\cdots,n;$$
\item \label{s=r_{i}+2}
when $s=r_{i}+2$, we have $$\sum\limits_{(r,p)\in
S}\partial_{1}^{r_{1}}t_{1}^{p_1}\otimes\cdots\otimes \a_{i}
F_{i}(t_{i}^{p_i})\otimes\cdots\otimes\partial_{m}^{r_{m}}t_{m}^{p_m}\otimes\partial_{m+1}^{r_{m+1}}\otimes\cdots\otimes\partial_{m+n}^{r_{m+n}}\otimes v_{(r,p)}\in W,i=1,\cdots,m;$$
\item \label{s=r_{i}+1}
when $s=r_{i}+1$, we have
\begin{equation*}
\begin{split}
&\sum\limits_{(r,p)\in S}\partial_{1}^{r_{1}}t_{1}^{p_1}\otimes\cdots\otimes G_{i}(t_{i}^{p_i})\otimes \cdots
\otimes \partial_{m}^{r_{m}}t_{m}^{p_m}\otimes\partial_{m+1}^{r_{m+1}}\otimes\cdots\otimes\partial_{m+n}^{r_{m+n}}\otimes v_{(r,p)} \in W, i=1,\cdots,m;
\end{split}
\end{equation*}
\item \label{l=r_{i}+1}
when $l=r_{m+j}+1$, we have
$$\sum\limits_{(r,p)\in S}\partial_{1}^{r_{1}}t_{1}^{p_1}\otimes \cdots\otimes \partial_{m}^{r_{m}}t_{m}^{p_m}\otimes\partial_{m+1}^{r_{m+1}}\otimes\cdots\otimes\b_{j}\otimes\cdots\otimes
\partial_{m+n}^{r_{m+n}}\otimes v_{(r,p)} \in W, j=1,\cdots,n.$$
\end{enumerate}
\end{proposition}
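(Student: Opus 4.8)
The plan is to apply the operators $d_k$, for all sufficiently large $k$, to $f$, and then to disentangle the resulting $W$-valued function of $k$ by a generalized Vandermonde argument built on Lemma~\ref{1} (this follows the same lines as Proposition~3.2 of \cite{CG}). The only structural input needed is that $d_k$ annihilates every vector occurring in $f$ once $k$ is large. By Theorem~\ref{MZ2-2}, $V$ is either a highest weight module $V(\theta,h)$ or $\Ind_\theta(N)$ with $N$ as in Theorem~\ref{MZ2-1}. In the first case $d_k$ raises the weight, so it kills any fixed finite-dimensional subspace as soon as $k$ exceeds the depths of its elements. In the second case, writing each $v_{(r,p)}$ in the PBW basis $d_{-i_1}\cdots d_{-i_p}\otimes w$ of $\Ind_\theta(N)$ and commuting $d_k$ to the right, every resulting term has some mode $d_{k'}$ with $k'\ge k-(i_1+\cdots+i_p)$ acting on $w\in N$, and this vanishes once $k'>k_0$, where $k_0$ is the integer of Theorem~\ref{MZ2-1}(b) with $d_iN=0$ for $i>k_0$; there are no central contributions, since those would force $k\le i_1+\cdots+i_p$. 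As $S$ is finite there is an $M$ bounding all the sums $i_1+\cdots+i_p$ that occur, so $d_kv_{(r,p)}=0$ for every $(r,p)\in S$ whenever $k>k_0+M$. Hence we may fix $K$ such that, for all $k\ge K$, $W$ is stable under $d_k$ and $d_kv_{(r,p)}=0$ for all $(r,p)\in S$.

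For such $k$ I would expand $d_kf$ using that $\Vir$ acts diagonally (by the Leibniz rule) on a tensor product of modules, dropping the vanishing terms in which $d_k$ lands on a $v_{(r,p)}$. By \eqref{def}, $d_k(\partial_i^{r_i}t_i^{p_i})=\lambda_i^k(\partial_i-k)^{r_i}\bigl(\partial_it_i^{p_i}+kG_i(t_i^{p_i})-k^2\alpha_iF_i(t_i^{p_i})\bigr)$; expanding $(\partial_i-k)^{r_i}=\sum_a\binom{r_i}{a}(-k)^a\partial_i^{r_i-a}$ exhibits this as $\lambda_i^k$ times a polynomial in $k$ of degree $\le r_i+2$ whose coefficient of $k^s$ equals $(-1)^s$ times the bracketed expression in the $i$-th summand of the first displayed formula. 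Likewise $d_k(\partial_{m+j}^{r_{m+j}})=\mu_j^k(\partial_{m+j}-k)^{r_{m+j}}(\partial_{m+j}-\beta_jk)$ is $\mu_j^k$ times a polynomial in $k$ of degree $\le r_{m+j}+1$ whose coefficient of $k^l$ equals $(-1)^l\bigl(\binom{r_{m+j}}{l}-\binom{r_{m+j}}{l-1}\beta_j\bigr)\partial_{m+j}^{r_{m+j}-l+1}$. Summing over $(r,p)\in S$ and over all slots therefore yields an identity
$$d_kf=\sum_{i=1}^{m}\lambda_i^k\sum_{s\ge0}(-k)^sC_{i,s}+\sum_{j=1}^{n}\mu_j^k\sum_{l\ge0}(-k)^lD_{j,l}\in W\qquad(k\ge K),$$
where the inner sums are finite and $\sum_iC_{i,s}$, $\sum_jD_{j,l}$ are precisely the left-hand sides of the two displayed formulas in the statement.

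To extract the individual summands, observe that the functions $k\mapsto\lambda_i^kk^s$ and $k\mapsto\mu_j^kk^l$ occurring above are finitely many functions of the form $k\mapsto\nu^kk^a$ with $\nu\in\C^*$; since the parameters $\lambda_i,\mu_j$ are pairwise distinct, Lemma~\ref{1} shows that the generalized Vandermonde matrix with rows indexed by consecutive integers $k=K,K+1,\dots$ and columns by these functions is invertible. Consequently the coefficient of each $\nu^kk^a$ in $d_kf$ lies in $W$, being a $\C$-linear combination of the vectors $d_kf\in W$; in particular each $C_{i,s}$ and each $D_{j,l}$ lies in $W$, and summing over the slots gives the two displayed formulas. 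The special cases (1)--(5) are then specializations: for $s=0$, resp.\ $l=0$, all binomial coefficients except $\binom{r_i}{0}=1$, resp.\ $\binom{r_{m+j}}{0}=1$, vanish, yielding (1) and (2); taking $s=r_i+2$, $s=r_i+1$, $l=r_{m+j}+1$ leaves only the $\alpha_iF_i$-term, the $G_i$-term, the $\beta_j$-term respectively, once one has passed to the top $\partial$-degree part of the sum (isolated by a downward induction, or automatic when $f$ is homogeneous in that degree), yielding (3)--(5).

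The bulk of the work, and the only place demanding care, is the expansion in the second paragraph: tracking the binomial coefficients and signs so that the coefficient of $\lambda_i^kk^s$ matches the statement verbatim. The one conceptual ingredient is the first step, that $d_k$ kills the $v_{(r,p)}$ for $k\gg0$, and there the substance is confined to the $\Ind_\theta(N)$ case via the commutation estimate indicated; everything after that is formal, resting only on Lemma~\ref{1} and the linearity of $W$.
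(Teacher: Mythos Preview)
Your proof is correct and follows essentially the same route as the paper: expand $d_kf$ via the Leibniz rule, drop the $V$-component using that $d_kv_{(r,p)}=0$ for $k$ large, and invoke Lemma~\ref{1} to separate out the coefficients of $\lambda_i^kk^s$ and $\mu_j^kk^l$. You are in fact more explicit than the paper about why $d_k$ annihilates the $v_{(r,p)}$ (the paper simply writes ``where $k$ is sufficiently large''), and your caveat that the special cases (3)--(5) as literally stated only isolate the top-$\partial_i$-degree part of $f$ is a genuine subtlety that the paper leaves implicit.
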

\begin{proof}
For any $0\neq f\in W$, we have
\begin{equation*}
\begin{split}
d_k(f)= & \sum_{(r,p)\in S}\sum_{i=1}^m\partial_{1}^{r_1}t_{1}^{p_1}\otimes\cdots\otimes
          d_k(\partial_{i}^{r_i}t_{i}^{p_i})\otimes\cdots\otimes
          \partial_{m}^{r_m}t_{m}^{p_m}\otimes\partial_{m+1}^{r_{m+1}}\otimes\cdots\otimes\partial_{m+n}^{r_{m+n}}\otimes v_{(r,p)}\\
        & +\sum_{(r,p)\in S}\sum_{j=1}^n\partial_{1}^{r_1}t_{1}^{p_1}\otimes\cdots\otimes\partial_{m}^{r_m}t_{m}^{p_m}
          \otimes\partial_{m+1}^{r_{m+1}}\otimes\cdots\otimes d_k(\partial_{m+j}^{r_{m+j}})\otimes\cdots\otimes\partial_{m+n}^{r_{m+n}}\otimes v_{(r,p)}\\
      = & \sum_{(r,p)\in S}\sum_{i=1}^m\partial_{1}^{r_1}t_{1}^{p_1}\otimes\cdots\otimes
          \l_{i}^k(\partial_i-k)^{r_i}\big(\partial_{i}t_{i}^{p_i}+kG_i(t_{i}^{p_i})-k^2\a_iF_i(t_{i}^{p_i})\big)\otimes\\
        &  \hskip5cm \cdots\otimes\partial_{m}^{r_m}t_{m}^{p_m} \otimes\partial_{m+1}^{r_{m+1}}\otimes\cdots\otimes\partial_{m+n}^{r_{m+n}}\otimes v_{(r,p)}\\
        & +\sum_{(r,p)\in S}\sum_{j=1}^n\partial_{1}^{r_1}t_{1}^{p_1}\otimes\cdots\otimes\partial_{m}^{r_m}t_{m}^{p_m}
           \otimes\partial_{m+1}^{r_{m+1}}\otimes\\
        & \hskip3cm \cdots\otimes \mu_{j}^{k}(\partial_{m+j}-k)^{r_{m+j}}(\partial_{m+j}-k\b_j)\otimes\cdots \otimes\partial_{m+n}^{r_{m+n}}\otimes v_{(r,p)}\\
      = & \sum_{(r,p)\in S}\sum_{i=1}^m \sum_{s=0}^{r_i+2}(-1)^s\l_{i}^kk^s\partial_{1}^{r_1}t_{1}^{p_1}\otimes\cdots
          \otimes \left[\binom{r_i}{s}\partial_i^{r_i-s+1}t_i^{p_i}-\binom{r_i}{s-1}\partial_i^{r_i-s+1} G_i(t_{i}^{p_i})\right.\\
        & \hskip1cm \left.-\binom{r_i}{s-2}\partial_i^{r_i-s+2}\a_iF_i(t_{i}^{p_i})\right]\otimes\cdots\otimes\partial_{m}^{r_m}t_{m}^{p_m}\otimes\partial_{m+1}^{r_{m+1}}\otimes\cdots         \otimes\partial_{m+n}^{r_{m+n}}\otimes v_{(r,p)}\\
        & +\sum_{(r,p)\in S}\sum_{j=1}^n \sum_{l=0}^{r_{m+j}+1}(-1)^l\mu_{j}^kk^l\partial_{1}^{r_1}t_{1}^{p_1}\otimes\cdots\partial_{m}^{r_m}t_{m}^{p_m}\otimes
          \partial_{m+1}^{r_{m+1}}\otimes\\
        & \hskip4cm \cdots\otimes \left[\binom{r_{m+j}}{l}+\binom{r_{m+j}}{l-1}\b_j\right] \partial_{m+j}^{r_{m+j}-l+1}\otimes\cdots\otimes\partial_{m+n}^{r_{m+n}}\otimes v_{(r,p)}
\end{split}
\end{equation*}
where $k$ is sufficiently large. From lemma \ref{1}, we see that the
coefficients of $k^{s}\l_{i}^{k}$ and $k^{l}\mu_{j}^{k}$ in
$(\bigotimes_{i=1}^{m}\Omega(\l_{i},\a_{i},h_{i})\bigotimes_{j=1}^{n}\Omega(\mu_{j},\b_{j}))\otimes V$ belong
to $W$ for $1\leq i\leq m, 0\leq s \leq \text{max}\{r_{i}+2\}$ and $1\leq j\leq n, 0\leq l\leq \text{max}\{r_{m+j}+1\}$.
Taking particular value to $ i,s,j,l$, then we get
$(1)-(5)$.
\end{proof}

\begin{theorem}Let $m, n\in\N$, $\lambda_{i}, \alpha_{i},\mu_j, \b_j\in\C^{*}$, $\deg h_{i}=1$ for $i=1,2,\cdots,m , j=1,2,\cdots,n$ and $\lambda_{1},\cdots,\l_m,\mu_1,\cdots,\mu_n$
are pairwise distinct. Let $V$ be an irreducible module over $\Vir$ such that each
$d_{k}$ is locally finite on both $V$ for all $k\geq K$
where $K$ is a fixed positive integer. Then the tensor product
$((\bigotimes_{i=1}^{m}\Omega(\l_{i},\a_{i},h_{i})\bigotimes_{j=1}^{n}\Omega(\mu_{j},\b_{j}))\otimes V$
is an irreducible $\Vir$ module.
\end{theorem}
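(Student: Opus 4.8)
\emph{Overview.} I want to show every nonzero submodule $W$ of $M:=\big(\bigotimes_{i=1}^{m}\Omega(\l_{i},\a_{i},h_{i})\bigotimes_{j=1}^{n}\Omega(\mu_{j},\b_{j})\big)\otimes V$ equals $M$. Such a $W$ is stable under $d_{k}$ for all sufficiently large $k$, so Proposition~\ref{technique} applies to it; moreover, as its proof shows --- through Lemma~\ref{1}, which is precisely where pairwise distinctness of $\l_{1},\dots,\l_{m},\mu_{1},\dots,\mu_{n}$ enters --- one may extract from $d_{k}f$ the coefficient of $k^{s}\l_{i}^{k}$ or of $k^{l}\mu_{j}^{k}$ for each index separately, i.e.\ the coordinatewise assertions (1)--(5) hold. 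Write $h_{i}(t)=a_{i}t+b_{i}$ with $a_{i}\in\C^{*}$ (using $\deg h_{i}=1$); then $F_{i}(t_{i}^{p})=a_{i}t_{i}^{p}-p\,t_{i}^{p-1}$ and $G_{i}(t_{i}^{p})=a_{i}t_{i}^{p+1}+(h_{i}(\a_{i})-p)t_{i}^{p}$ by \eqref{operator}. The plan is: first exhibit in $W$ a nonzero vector $1\otimes\cdots\otimes1\otimes v$ with $v\in V\setminus\{0\}$, then deduce $W=M$.

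\emph{Step 1.} Take $0\neq f\in W$ of minimal degree (the monomial order is a well-order). The basic move is: applied to a nonzero element of $W$, assertion (5) at a coordinate $m+j$, taken with $l$ equal to one plus the top $\partial_{m+j}$-exponent appearing, or assertion (3) at a coordinate $i\le m$, taken with $s$ equal to two plus the top $\partial_{i}$-exponent, annihilates every monomial not of top $\partial_{m+j}$- (resp.\ $\partial_{i}$-) degree and drives that exponent to $0$ in the survivors (multiplying by $-\b_{j}\neq0$ for (5), applying $-\a_{i}F_{i}$ for (3)); as long as the relevant top exponent is positive, every resulting monomial has strictly smaller degree than the monomial of $f$ it comes from, so minimality forces $f$ to have every $\partial_{1},\dots,\partial_{m+n}$-exponent zero. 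Thus $f=\sum_{(p)}t_{1}^{p_{1}}\otimes\cdots\otimes t_{m}^{p_{m}}\otimes1\otimes\cdots\otimes1\otimes v_{(p)}$. Were some $t$-exponent still positive, I would let $k$ be least with $\pi_{k}>0$, $(\pi_{1},\dots,\pi_{m})$ being the $t$-part of $\deg f$; then (3) at coordinate $k$ with $s=2$ (legitimate, all $\partial_{k}$-exponents being $0$) yields $-\a_{k}a_{k}f+\a_{k}h\in W$, where $h$ arises from $f$ by replacing each $t_{k}$-factor $t_{k}^{p_{k}}$ by $p_{k}t_{k}^{p_{k}-1}$, so $h\in W$; but $h\neq0$ (the coefficient of $t_{1}^{\pi_{1}}\otimes\cdots\otimes t_{k}^{\pi_{k}-1}\otimes\cdots\otimes1\otimes\cdots\otimes1$ in $h$ is $\pi_{k}v_{(\pi)}\neq0$, with nothing else contributing there) and $\deg h<\deg f$ --- a contradiction. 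Hence $\deg f=\mathbf{0}$, so $f=1\otimes\cdots\otimes1\otimes v$ with $v\neq0$.

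\emph{Step 2.} Let $V':=\{v\in V:1\otimes\cdots\otimes1\otimes v\in W\}$; by Step 1 this is a nonzero subspace. First, $v\in V'$ forces $x\otimes v\in W$ for every monomial $x$ of $\bigotimes_{i}\Omega(\l_{i},\a_{i},h_{i})\bigotimes_{j}\Omega(\mu_{j},\b_{j})$: assertion (4) at coordinate $i$ with $s=1$ sends $1\otimes\cdots\otimes t_{i}^{p}\otimes\cdots\otimes v$ to $1\otimes\cdots\otimes G_{i}(t_{i}^{p})\otimes\cdots\otimes v$, and since $G_{i}(t_{i}^{p})$ has leading term $a_{i}t_{i}^{p+1}$ ($a_{i}\neq0$), induction on $p$ and then over the coordinates puts all $t_{1}^{p_{1}}\otimes\cdots\otimes t_{m}^{p_{m}}\otimes1\otimes\cdots\otimes1\otimes v$ in $W$; then (1) with $s=0$ and (2) with $l=0$ raise $\partial_{i}$- and $\partial_{m+j}$-exponents freely. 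Second, $V'$ is a $\Vir$-submodule: $cv$ is a scalar multiple of $v$; and for $k\in\Z$, as $W$ is a submodule $d_{k}(1\otimes\cdots\otimes1\otimes v)\in W$, and expanding by the Leibniz rule --- $d_{k}$ acting on the $i$-th factor through \eqref{def} at $\partial$-exponent $0$, on the $(m+j)$-th through the analogous formula --- every summand other than $1\otimes\cdots\otimes1\otimes d_{k}v$ is a scalar multiple of one of $1\otimes\cdots\otimes\partial_{i}\otimes\cdots\otimes1\otimes v$, $1\otimes\cdots\otimes t_{i}\otimes\cdots\otimes1\otimes v$, $1\otimes\cdots\otimes\partial_{m+j}\otimes\cdots\otimes1\otimes v$, $1\otimes\cdots\otimes1\otimes v$, all in $W$; hence $d_{k}v\in V'$. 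So $V'$ is a nonzero submodule of the irreducible $V$, giving $V'=V$ and $W\supseteq\big(\bigotimes_{i}\Omega(\l_{i},\a_{i},h_{i})\bigotimes_{j}\Omega(\mu_{j},\b_{j})\big)\otimes V=M$; therefore $M$ is irreducible.

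\emph{Main obstacle.} The hard part is verifying, at each reduction in Step 1, that the element one extracts is actually nonzero. Both $-\a_{i}F_{i}$ and the map $t_{k}^{p}\mapsto p\,t_{k}^{p-1}$ turn a monomial with $t_{i}$-factor $t_{i}^{p}$ into a combination of $t_{i}^{p}$ and $t_{i}^{p-1}$, so after the operation distinct monomials of $f$ can overlap and, in principle, cancel. I would handle this by fixing all tensor factors other than the $i$-th together with the accompanying vector of $V$, and looking at the highest power of $t_{i}$ surviving there: cancelling its coefficient would require a monomial of $f$ with strictly larger $t_{i}$-exponent --- hence of strictly larger degree --- which does not exist, so that coefficient is a nonzero scalar times a nonzero vector of $V$, and the element is nonzero. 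The parallel bookkeeping point, used throughout Step 1, is that each of these operators strictly lowers the degree of every monomial it acts on (a positive $\partial$-exponent goes to $0$, or a $t$-exponent drops, while the higher-priority coordinates of the degree tuple stay fixed).
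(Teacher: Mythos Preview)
Your proof is correct and follows essentially the same strategy as the paper: take a nonzero element of minimal degree in an arbitrary nonzero submodule $W$, use the separation operations of Proposition~\ref{technique} (parts (3) and (5)) to force that element to be of the form $1\otimes\cdots\otimes 1\otimes v$, then build back up via parts (1), (2), (4) and conclude by showing the set of $v$ with $1\otimes\cdots\otimes 1\otimes v\in W$ is a nonzero $\Vir$-submodule of the irreducible $V$.

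The only noteworthy difference is one of care rather than method. The paper passes over the non-vanishing of the reduced elements without comment; you identify this as the genuine obstacle and handle it explicitly, arguing that in each reduction the leading surviving monomial (with respect to the relevant $t_i$-degree, all other tensor factors fixed) carries a nonzero $V$-coefficient that cannot be cancelled. That argument is sound. A second cosmetic difference is that the paper defines $Y=\{v\in V:(\bigotimes\Omega)\otimes v\subseteq W\}$ whereas you define $V'=\{v:1\otimes\cdots\otimes1\otimes v\in W\}$; your first half of Step~2 establishes $V'\subseteq Y$, after which the submodule verification is equivalent in both versions.
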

\begin{proof}Let $X=(\bigotimes_{i=1}^{m}\Omega(\l_{i},\a_{i},h_{i})\bigotimes_{j=1}^{n}\Omega(\mu_{j},\b_{j}))\otimes V$, and let $W$ be a nonzero submodule of $X$.
Take a nonzero element $f=\sum\limits_{(r,p)\in
S}\partial_{1}^{r_1}t_{1}^{p_1}\otimes\cdots\otimes\partial_{m}^{r_m}t_{m}^{p_m}\otimes\partial_{m+1}^{r_{m+1}}\otimes\cdots\otimes
\partial_{m+n}^{r_{m+n}}\otimes
v_{(r,p)}\in W$ with minimal degree. By Proposition \ref{technique} \eqref{s=r_{i}+2}, we
have $d_k(\sum\limits_{(r,p)\in S}
\partial_{1}^{r_{1}}t_{1}^{p_1}\otimes\cdots\otimes \a_{i}
F_{i}(t_{i}^{p_i})\otimes\cdots\otimes
\partial_{m}^{r_{m}}t_{m}^{p_m}\otimes\partial_{m+1}^{r_{m+1}}\otimes\cdots\otimes
\partial_{m+n}^{r_{m+n}}\otimes v_{(r,p)})\in W$. And by Proposition \ref{technique} \eqref{l=r_{i}+1}, we get
 $\sum\limits_{(r,p)\in S}
\partial_{1}^{r_{1}}t_{1}^{p_1}\otimes\cdots\otimes \a_{i}
F_{i}(t_{i}^{p_i})\otimes\cdots\otimes
\partial_{m}^{r_{m}}t_{m}^{p_m}\otimes\partial_{m+1}^{r_{m+1}}\otimes\cdots\otimes
\b_j\otimes\cdots\otimes v_{(r,p)}\in W$.
Then from the minimality of $(r,p)$,
we have $\sum\limits_{(0,p)\in S}t_{1}^{p_1}\otimes\cdots\otimes t_{m}^{p_m}\otimes1\otimes\cdots\otimes1\otimes v_{(0,p)}\in W$.
Using the action of $d_{k}$ on$\sum\limits_{(0,p)\in S}t_{1}^{p_1}\otimes\cdots\otimes t_{m}^{p_m}\otimes1\otimes\cdots\otimes1\otimes v_{(0,p)}$,
we get
$\sum\limits_{(0,p)\in S}t_{1}^{p_1}\otimes\cdots\otimes \a_iF_{i}(t_{i}^{p_i})\otimes\cdots\otimes t_{m}^{p_m}\otimes1\otimes\cdots\otimes1\otimes v_{(0,p)}\in W,$
by Proposition \ref{technique} \eqref{s=r_{i}+2},  we
 have $\sum\limits_{(0,p)\in S}t_{1}^{p_1}\otimes\cdots\otimes t_{i}^{p_{i}-1}\otimes\cdots\otimes t_{m}^{p_m}\otimes1\otimes\cdots\otimes1\otimes v_{(0,p)}\in W.$
So $1\otimes\cdots\otimes1\otimes v_{(0,p)} \in W$
by downward induction on the degree of $t_i$. Similarly, we use $d_{k}(1\otimes \cdots\otimes1\otimes v_{(0,p)})$
and we can have
\begin{equation}\begin{array}{l}
1\otimes\cdots\otimes\partial_{i}\otimes\cdots\otimes 1\otimes v_{(0,p)}\in W;\quad
1\otimes\cdots\otimes\partial_{m+j}\otimes\cdots\otimes 1\otimes v_{(0,p)})\in W;\\\\
1\otimes\cdots\otimes G_{i}(1)\otimes\cdots\otimes 1\otimes v_{(0,p)}\in W;\quad
1\otimes\cdots\otimes \a_iF_{i}(1)\otimes\cdots\otimes 1\otimes v_{(0,p)}\in W;\\\\
\end{array}\end{equation}
where $i=1,\cdots,m, j=1,\cdots,n.$
Next again by Proposition \ref{technique} , we can deduce
$(\bigotimes_{i=1}^{m}\Omega(\l_{i},$
$\a_{i},h_{i})\bigotimes_{j=1}^{n}\Omega(\mu_{j},\b_{j}))\otimes v_{(0,p)}\in W$
by upward induction on the degree of $\partial_{i},\partial_{m+j}$ and $t_i$.
Now let
$$ Y=\{v\in V | (\bigotimes_{i=1}^{m}\Omega(\l_{i},\a_{i},h_{i})\bigotimes_{j=1}^{n}\Omega(\mu_{j},\b_{j}))\otimes v\in W\},$$
we know that $Y$ is nonempty.
Again using
$d_{k}(w\otimes v)=d_{k}w\otimes v + w\otimes d_{k}v$, where $w\in(\bigotimes_{i=1}^{m}\Omega(\l_{i},\a_{i},h_{i})\bigotimes_{j=1}^{n}\Omega(\mu_{j},\b_{j})), v\in Y,$
we deduce that $Y$ is a submodule of $V$. Thus $Y=V$ and $W=X$.
\end{proof}
From the above theorem, we see that if $\l_1,\cdots,\l_m, \mu_1,\cdots,\mu_n$ are pairwise distinct,
$\bigotimes_{i=1}^{m}\Omega(\l_{i},\a_{i},h_{i})\bigotimes_{j=1}^{n}\Omega(\mu_{j},\b_{j})$ is an irreducible $\Vir$ module.
When $\l_1,\cdots,\l_m, \mu_1,\cdots,\mu_n$ are not pairwise distinct, is  the module
$\bigotimes_{i=1}^{m}\Omega(\l_{i},\a_{i},h_{i})\bigotimes_{j=1}^{n}\Omega(\mu_{j},\b_{j})$ reducible? This
conjecture is correct.
\par
Next we will discuss the reducibility of $\Omega(\l,\a,h)\bigotimes\Omega(\l,\b)$. As before, we denote
\\
$\Omega(\l,\a,h)\bigotimes\Omega(\l,\b)=\C[\partial_1,\partial_2,t].$ Since $\C[\partial_1,\partial_2,t]$
is the polynomial algebra, we have $\Omega(\l,\a,h)=\C[\partial_1,t], \Omega(\l,\b)=\C[\partial_2]$
or $\Omega(\l,\a,h)=\C[\partial_2,t], \Omega(\l,\b)=\C[\partial_1].$ So we can have the following theorem.
\begin{theorem} Let $m\in \Z_+,$ $\l,\a,\b\in \C^{*},$ $h\in \C[t]$ and $\deg(h)=1.$
Let $W_m$ be the subspace of $\Omega(\l,\a,h)\otimes\Omega(\l,\b)$ spanned by the elements:
$\{\partial_{1}^{l}(\partial_1+\partial_2)^{n}\C[t]\mid 0\leq l\leq m, \forall n\in\Z_+\},$
where $\partial_1\in \Omega(\l,\a,h), \partial_2\in \Omega(\l,\b)$ or $\partial_1\in \Omega(\l,\a,h),\partial_2\in \Omega(\l,\b).$ Then each $W_m$ is
a submodule of $\Omega(\l,\a,h)\otimes\Omega(\l,\b).$
\end{theorem}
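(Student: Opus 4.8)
The plan is to verify directly that each generator $d_k$ sends $W_m$ into itself; since the central element $c$ acts by $0$ on both tensor factors it causes no trouble. I would begin by rewriting the module action in adapted coordinates. Identify $\Omega(\l,\a,h)\otimes\Omega(\l,\b)$ with the polynomial algebra $\C[t,\partial_1,\partial_2]$ and put $u=\partial_1$, $v=\partial_1+\partial_2$, so that $\C[\partial_1,\partial_2]=\C[u,v]$ and, by its very definition, $W_m$ is exactly the set of elements of $\C[t,u,v]$ of degree $\le m$ in the variable $u$. Using the rewritten action \eqref{def} on $\Omega(\l,\a,h)$ together with the action on $\Omega(\l,\b)$ — and crucially the fact that the two factors use the \emph{same} $\l$, so that the two Leibniz contributions share the common prefactor $\l^k$ — a short computation gives, for $Q\in\C[u,v]$ and $g\in\C[t]$,
\[
d_k\bigl(Q(u,v)\,g\bigr)=\l^k\,u g\,\bigl[Q(u-k,v-k)-Q(u,v-k)\bigr]+\l^k Q(u-k,v-k)\bigl(kG(g)-k^2\a F(g)\bigr)+\l^k Q(u,v-k)\,(v-\b k)\,g ,
\]
where one uses that substituting $\partial_1\mapsto\partial_1-k$ (with $\partial_2$ fixed) amounts to $u\mapsto u-k$, $v\mapsto v-k$, while substituting $\partial_2\mapsto\partial_2-k$ (with $\partial_1$ fixed) amounts to $u\mapsto u$, $v\mapsto v-k$.

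Next I would bound the $u$-degree of each of the three summands. Since $F(g),G(g)\in\C[t]$ carry no $\partial$'s, and since affine substitutions in the arguments of $Q$ never raise the degree in $u$, the second and third summands have $u$-degree equal to $\deg_u Q$, hence lie in $W_m$ as soon as $\deg_u Q\le m$. The only term that might escape $W_m$ is the first, and the key point — essentially the entire content of the theorem — is that the bracket $Q(u-k,v-k)-Q(u,v-k)$ has $u$-degree \emph{strictly} smaller than $\deg_u Q$: writing $Q(u,v)=\sum_{l}u^l q_l(v)$, the bracket equals $\sum_{l}\bigl((u-k)^l-u^l\bigr)q_l(v-k)$, and each $(u-k)^l-u^l$ has $u$-degree at most $l-1$, so the bracket has $u$-degree at most $\deg_u Q-1$; multiplying by $u$ returns the first summand to $u$-degree at most $\deg_u Q\le m$. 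Thus $d_k\bigl(Q(u,v)g\bigr)\in W_m$ whenever $\deg_u Q\le m$.

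Finally, the spanning elements $\partial_1^l(\partial_1+\partial_2)^n g(t)=u^l v^n g(t)$ of $W_m$ all have $u$-degree $\le m$, so the computation above yields $d_k W_m\subseteq W_m$ for every $k\in\Z$ and hence $W_m$ is a $\Vir$-submodule. For the other labeling allowed in the statement (the roles of the two factors interchanged) the computation is identical in spirit: one obtains again the single critical term $\l^k u g\,[Q(u-k,v-k)-Q(u,v-k)]$ together with summands of $u$-degree $\le\deg_u Q$, so no new argument is needed. I do not anticipate a genuine obstacle beyond keeping the coordinate change and the degree bookkeeping straight — the theorem works precisely because matching the two $\l$'s forces the top power of $u$ to cancel in $Q(u-k,v-k)-Q(u,v-k)$.
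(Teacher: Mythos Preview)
Your proposal is correct and follows essentially the same approach as the paper: the paper computes $d_k$ directly on the spanning elements $\partial_1^l(\partial_1+\partial_2)^n f$, expands $(\partial_1+\partial_2)^n$ binomially, and then recombines to isolate the same critical term $\partial_1\bigl((\partial_1-k)^l-\partial_1^l\bigr)$ whose $\partial_1$-degree is $\le l$. Your change of variables $u=\partial_1$, $v=\partial_1+\partial_2$ and treatment of a general $Q(u,v)$ streamlines the bookkeeping, but the key cancellation and the degree argument are identical to the paper's.
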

\begin{proof} We denote $\Omega(\l,\a,h)\otimes\Omega(\l,\b)=\C[\partial_1,\partial_2,t],$ and $\partial_1\in \Omega(\l,\a,h)$ or $\partial_1\in \Omega(\l,\a,h),$
so we reduce $W_m$ has two case.
\begin{case} $\Omega(\l,\a,h)=\C[\partial_1,t], \Omega(\l,\b)=\C[\partial_2].$ \end{case}
Take $0\neq f\in\C[t]$, for any $l,n\in\Z_+, k\in \Z,$ we have
\begin{equation*}
\begin{split}
  &   \l^{-k}d_k(\partial_{1}^{l}(\partial_1+\partial_2)^{n}f)
=    \l^{-k}d_k(\partial_{1}^{l}\sum_{j=0}^{n}\binom{n}{j}\partial_{1}^{j}\partial_{2}^{n-j}f)
=    \l^{-k}\sum_{j=0}^{n}\binom{n}{j}d_k(\partial_{1}^{l+j}f\partial_{2}^{n-j})\\
= &   \l^{-k}\sum_{j=0}^{n}\binom{n}{j}d_k(\partial_{1}^{l+j}f)\partial_{2}^{n-j}+\l^{-k}\sum_{j=0}^{n}\binom{n}{j}\partial_{1}^{l+j}fd_k(\partial_{2}^{n-j})\\
= &   \sum_{j=0}^{n}\binom{n}{j}(\partial_{1}-k)^{l+j}\Big(\partial_{1}f+kG(f)-k^{2}\a F(f)\Big)\partial_{2}^{n-j}
      +\sum_{j=0}^{n}\binom{n}{j}\partial_{1}^{l+j}f(\partial_2-k)^{n-j}\Big(\partial_2-\b k\Big)\\
= &   (\partial_{1}-k)^{l}\Big(\partial_{1}f+kG(f)-k^{2}\a F(f)\Big)\sum_{j=0}^{n}\binom{n}{j}(\partial_{1}-k)^{j}\partial_{2}^{n-j}\\
  &   \hskip4cm  +\partial_{1}^{l}f\Big(\partial_2-\b k\Big)\sum_{j=0}^{n}\binom{n}{j}\partial_{1}^{j}(\partial_2-k)^{n-j}\\
= &   (\partial_{1}-k)^{l}\Big(\partial_{1}f+kG(f)-k^{2}\a F(f)\Big)(\partial_2+\partial_1-k)^{n}+\partial_{1}^{l}f\Big(\partial_2-\b
      k\Big)(\partial_2+\partial_1-k)^{n}\\
= &   \Big[\partial_{1}\Big((\partial_{1}-k)^{l}-\partial_{1}^{l}\Big)f+\Big(kG(f)-k^{2}\a F(f)\Big)(\partial_{1}-k)^{l}\Big](\partial_2+\partial_1-k)^{n}
      +\partial_{1}^{l}f\Big(\partial_1+\partial_2-\b k\Big)\\
  &   \hskip4cm  (\partial_2+\partial_1-k)^{n},\\
\end{split}
\end{equation*}
and this implies $d_k(\partial_{1}^{l}(\partial_1+\partial_2)^{n}f)\in W_m$ for $0\leq l\leq m, n\in \Z_+, k\in \Z.$
Moreover, $c(W_m)={0}\subset W_{m}.$ So we deduce that $W_m$ is a submodule of $\Omega(\l,\a,h)\otimes\Omega(\l,\b).$
\begin{case} $\Omega(\l,\a,h)=\C[\partial_2,t], \Omega(\l,\b)=\C[\partial_1].$ \end{case}
Similarly, take $0\neq f\in\C[t]$, for any $l,n\in\Z_+, k\in \Z,$ we have
\begin{equation*}
\begin{split}
  &   \l^{-k}d_k(\partial_{1}^{l}(\partial_1+\partial_2)^{n}f)
=    \l^{-k}d_k(\partial_{1}^{l}\sum_{j=0}^{n}\binom{n}{j}\partial_{1}^{j}\partial_{2}^{n-j}f)
=    \l^{-k}\sum_{j=0}^{n}\binom{n}{j}d_k(\partial_{1}^{l+j}\partial_{2}^{n-j}f)\\
= &   \l^{-k}\sum_{j=0}^{n}\binom{n}{j}d_k(\partial_{1}^{l+j})\partial_{2}^{n-j}f+\l^{-k}\sum_{j=0}^{n}\binom{n}{j}\partial_{1}^{l+j}d_k(\partial_{2}^{n-j}f)\\
= &   \sum_{j=0}^{n}\binom{n}{j}(\partial_{1}-k)^{l+j}\Big(\partial_{1}-\b k\Big)\partial_{2}^{n-j}f
      +\sum_{j=0}^{n}\binom{n}{j}\partial_{1}^{l+j}(\partial_2-k)^{n-j}\Big(\partial_2f+kG(f)-k^{2}\a F(f)\Big)\\
= &   (\partial_{1}-k)^{l}\Big(\partial_1-\b k\Big)\sum_{j=0}^{n}\binom{n}{j}(\partial_{1}-k)^{j}\partial_{2}^{n-j}f
      +\partial_{1}^{l}\Big(\partial_2f+kG(f)-k^{2}\a F(f)\Big)\\
  &   \hskip3.5cm  \sum_{j=0}^{n}\binom{n}{j}\partial_{1}^{j}(\partial_2-k)^{n-j}\\
= &   (\partial_{1}- k)^{l}\Big(\partial_{1}-\b k\Big)(\partial_2+\partial_1-k)^{n}f+\partial_{1}^{l}\Big(\partial_2f+kG(f)-k^{2}\a
      F(f)\Big)(\partial_2+\partial_1-k)^{n}\\
= &    \Big[\partial_{1}\Big((\partial_{1}-k)^{l}-\partial_{1}^{l}\Big)-\b k(\partial_{1}-k)^{l}\Big](\partial_2+\partial_1-k)^{n}f
      +\partial_{1}^{l}\Big((\partial_1+\partial_2)f+kG(f)\\
  &   \hskip3.5cm  -k^{2}\a F(f)\Big)(\partial_2+\partial_1-k)^{n},\\
\end{split}
\end{equation*}
which also implies $d_k(\partial_{1}^{l}(\partial_1+\partial_2)^{n}f)\in W_m$ for $0\leq l\leq m, \forall n\in \Z_+, k\in \Z.$
Clearly, $c(W_m)={0}\subset W_{m}.$ Therefore, we complete the proof of this theorem.
\end{proof}

\begin{corollary}Let $\l,\a,\b \in\C^{*}$ and $m\in \Z_+.$ Denote by $W_m$ the submodule of $\Omega(\l,\a,h)\otimes\Omega(\l,\b)$
spanned by the elements $\{\partial_{1}^{l}(\partial_1+\partial_2)^{n}\C[t],0\leq l\leq m, n\in \Z_+\},$
where $\partial_1\in \Omega(\l,\a,h), \partial_2\in \Omega(\l,\b)$ or $\partial_2\in \Omega(\l,\a,h),\partial_1\in \Omega(\l,\b).$
Set $W_{-1}=0$. Then $\overline{W}_{m}=W_m/W_{m-1}$ is isomorphic to the module $\Omega(\l,\a,h-m-\b).$
\end{corollary}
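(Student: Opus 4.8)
The plan is to construct an explicit $\Vir$-module isomorphism $\phi\colon\Omega(\l,\a,h-m-\b)\to\overline{W}_m$. First I would fix a convenient basis of $\overline{W}_m$. Identifying $\Omega(\l,\a,h)\otimes\Omega(\l,\b)=\C[\partial_1,\partial_2,t]$ as in the preceding theorem, the substitution $\partial_2\mapsto\partial_1+\partial_2$ is an invertible change of variables, so $\{\partial_1^{l}(\partial_1+\partial_2)^{n}t^{p}:l,n,p\in\Z_+\}$ is a $\C$-basis of $\C[\partial_1,\partial_2,t]$; consequently $W_m=\spn\{\partial_1^{l}(\partial_1+\partial_2)^{n}t^{p}:0\le l\le m,\ n,p\in\Z_+\}$, the same holds for $W_{m-1}$ with $l\le m-1$, and therefore $\{\overline{\partial_1^{m}(\partial_1+\partial_2)^{n}t^{p}}:n,p\in\Z_+\}$ is a basis of $\overline{W}_m$. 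Writing $\Omega(\l,\a,h-m-\b)=\C[\partial,t]$, I would define $\phi$ on its monomial basis by $\phi(\partial^{n}t^{p})=\overline{\partial_1^{m}(\partial_1+\partial_2)^{n}t^{p}}$; it is then automatically a linear isomorphism, and since $c$ acts as $0$ on both modules it only remains to verify $\phi(d_k x)=d_k\phi(x)$ for all $k\in\Z$.

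For the intertwining I would reuse the computation of $\l^{-k}d_k\big(\partial_1^{l}(\partial_1+\partial_2)^{n}f\big)$ already carried out in the proof of the preceding theorem (in both of its cases), specialised to $l=m$, and then reduce modulo $W_{m-1}$. The reduction hinges on two elementary observations: $(\partial_1-k)^{m}-\partial_1^{m}=-mk\,\partial_1^{m-1}+(\text{terms of }\partial_1\text{-degree}\le m-2)$, and any monomial $\partial_1^{j}(\partial_1+\partial_2-k)^{n}f$ with $0\le j\le m-1$ lies in $W_{m-1}$ (expand $(\partial_1+\partial_2-k)^{n}$ in powers of $\partial_1+\partial_2$). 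With these in hand, the Case 1 and Case 2 expressions of the preceding proof both collapse, modulo $W_{m-1}$, to the same formula: writing $\partial:=\partial_1+\partial_2$,
\begin{equation*}
\l^{-k}d_k\big(\partial_1^{m}(\partial_1+\partial_2)^{n}f\big)\equiv\partial_1^{m}(\partial-k)^{n}\Big(\partial f+k\big(G(f)-(m+\b)f\big)-k^{2}\a F(f)\Big)\pmod{W_{m-1}}.
\end{equation*}

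It remains to match this against the target module. Since subtracting a constant from $h$ changes neither $f'$ nor $\frac{h(t)-h(\a)}{t-\a}$, for $\tilde h:=h-(m+\b)$ the operators of \eqref{operator} satisfy $\tilde F=F$ and $\tilde G=G-(m+\b)\,\mathrm{id}$, so the bracket in the display equals $\partial f+k\tilde G(f)-k^{2}\a\tilde F(f)$; comparing with the defining action \eqref{def} of $\Omega(\l,\a,\tilde h)$ and with the definition of $\phi$ yields $d_k\phi(\partial^{n}f)=\phi\big(d_k(\partial^{n}f)\big)$ for every $k\in\Z$, so $\phi$ is the desired isomorphism. The only delicate point in the whole argument is the bookkeeping of which summands are absorbed into $W_{m-1}$ --- in particular, checking that the ``$\partial_1^{j}$ with $j<m$'' pieces produced when $(\partial_1-k)^{m}$ is expanded really do land in $W_{m-1}$, which is exactly what the basis description of the filtration $W_{\bullet}$ provides; past that, Case 2 requires no new idea.
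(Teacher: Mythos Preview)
Your proposal is correct and follows essentially the same approach as the paper: define the bijection $\partial^{n}f\leftrightarrow\partial_1^{m}(\partial_1+\partial_2)^{n}f+W_{m-1}$, reuse the explicit formula for $\l^{-k}d_k\big(\partial_1^{l}(\partial_1+\partial_2)^{n}f\big)$ from the preceding theorem at $l=m$, and reduce modulo $W_{m-1}$ to recognise the action of $\Omega(\l,\a,h-m-\b)$. The only cosmetic difference is that the paper writes the isomorphism in the direction $\overline{W}_m\to\Omega(\l,\a,h-m-\b)$, and you have spelled out more carefully why the basis description of $W_{m-1}$ absorbs the lower $\partial_1$-degree terms and why $\tilde F=F$, $\tilde G=G-(m+\b)\,\mathrm{id}$, points the paper leaves implicit.
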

\begin{proof}From the proof of the above theorem, we know the module $W_{m}$ has two cases. So we need discuss two cases.
\begin{claim} If $\Omega(\l,\a,h)=\C[\partial_1,t], \Omega(\l,\b)=\C[\partial_2], \Omega(\l,\a,h-m-\b)=\C[\partial, t]$,\\
then $\overline{W}_{m}=W_m/W_{m-1}\cong \Omega(\l,\a,h-m-\b).$ \end{claim}
When $\Omega(\l,\a,h)=\C[\partial_1,t], \Omega(\l,\b)=\C[\partial_2],$ we have
\begin{equation*}
\begin{split}
  & \l^{-k}d_k(\partial_{1}^{m}(\partial_1+\partial_2)^{n}f+W_{m-1})\\
= & \l^{-k}d_k(\partial_{1}^{m}(\partial_1+\partial_2)^{n}f)+W_{m-1}\\
= & (\partial_{1}-k)^{m}\Big(\partial_{1}f+kG(f)-k^{2}\a F(f)\Big)(\partial_2+\partial_1-k)^{n}+\partial_{1}^{m}f\Big(\partial_2-\b k\Big)(\partial_2+\partial_1-k)^{n}+W_{m-1}\\
= & \partial_{1}^{m}\Big[(\partial_{1}+\partial_{2})f+k\big(G(f)-mf-\b f\big)-k^{2}\a F(f)\Big](\partial_2+\partial_1-k)^{n}+W_{m-1}, \quad\forall\ f\in\C[t],\\
\end{split}
\end{equation*}
and $c(\overline{W}_{m})=0.$ Thus there is a $\Vir$ module isomorphic
$\varphi: \overline{W}_{m} \longrightarrow \Omega(\l,\a,h-m-\b)$
by $\partial_{1}^{m}(\partial_1+\partial_2)^{n}f+W_{m-1} \longrightarrow \partial^{n}f.$ This completes the claim 1.
\begin{claim}$\Omega(\l,\a,h)=\C[\partial_2,t], \Omega(\l,\b)=\C[\partial_1],$ $\overline{W}_{m}=W_m/W_{m-1}\cong \Omega(\l,\a,h-m-\b).$ \end{claim}
When $\Omega(\l,\a,h)=\C[\partial_2,t], \Omega(\l,\b)=\C[\partial_1],$ we have
\begin{equation*}
\begin{split}
  & d_k(\partial_{1}^{m}(\partial_1+\partial_2)^{n}f+W_{m-1})\\
= & d_k(\partial_{1}^{m}(\partial_1+\partial_2)^{n}f)+W_{m-1}\\
= & (\partial_{1}- k)^{m}\Big(\partial_{1}-\b k\Big)(\partial_2+\partial_1-k)^{n}f+\partial_{1}^{m}\Big(\partial_2f+kG(f)-k^{2}\a F(f)\Big)(\partial_2+\partial_1-k)^{n}\\
=&\partial_{1}^{m}\Big[(\partial_{1}+\partial_{2})f+k\big(G(f)-mf-\b f\big)-k^{2}\a F(f)\Big](\partial_2f+\partial_1-k)^{n}+W_{m-1}, \quad\forall\ f\in\C[t],\\
\end{split}
\end{equation*}
the following proof is similar to the first.
\end{proof}
\section{Isomorphism}
In this section we will determine the necessary and sufficient
conditions for two such modules of the form $\bigotimes_{i=1}^{m}\Omega(\lambda_i, \alpha_i, h_i )\bigotimes_{j=1}^{n}\Omega(\mu_j, \b_j )\bigotimes V$ to be isomorphic.
\par
As the paper of [TZ], we will firstly discuss some properties of $\bigotimes_{i=1}^{m}\Omega(\lambda_i, \alpha_i, h_i)\bigotimes_{j=1}^{n}\Omega(\mu_j, $ $\b_j )\bigotimes V$ denoted by $Y$, where
$\l_i, \mu_j, \a_i\in\mathbb{C^*}, h_i\in\mathbb{C}[t]$, $\l_1,\cdots, \l_m, \mu_1, \cdots, \mu_n$ are pairwise distinct, and $V$ is an irreducible module over Vir such that
$d_k$ is locally finite on $V$ for all $k>K$, where $K$ is a fixed positive integer.
 Set
$$R_f=\lim _{K\rightarrow \infty}\text{rank}(\{d_k(f): k>K\}), \quad R_Y=\text{inf}\{R_f: f\in Y\backslash\{0\}\}.$$
Then we will get
\begin{lemma} Let $f\in Y,$ we get\\
$(1)$ $R_f=\text{rank}(\{d_k(f): k>K \}), \forall K>K(f).$\\
$(2)$ If $m>1$, then $R_f=2m+n+1$ if and only if $f\in\{ 1\otimes 1\cdots\otimes 1\otimes v: v\in V\}$.\\
$(3)$ $R_Y=2m+n+1$.
\begin{proof} (1)  We take $f=\sum_{(r,p)\in S}\partial_{1}^{r_1}t_{1}^{p_1}\otimes\cdots\otimes\partial_{m}^{r_m}t_{m}^{p_m}\otimes\partial_{m+1}^{r_{m+1}}\otimes\cdots\otimes\partial_{m+n}^{r_{m+n}}\otimes v_{(r,p)}$ from $Y$  , where $\partial_{1}^{r_1}t_{1}^{p_1}\otimes\cdots\otimes\partial_{m}^{r_m}t_{m}^{p_m}\otimes\partial_{m+1}^{r_{m+1}}\otimes\cdots\otimes\partial_{m+n}^{r_{m+n}}\otimes v_{(r,p)}$ be the unique maximal term in the sum.
For each $k>K>K(f)$ we have
\begin{equation*}\begin{split}
&  d_k(f)\\
&  =\sum_{(r,p)\in S}\sum_{i=1}^m \sum_{s=0}^{r_i+2}(-1)^s\l_{i}^k k^s\partial_{1}^{r_1}t_{1}^{p_1}\otimes\cdots\otimes
   [\binom{r_i}{s}\partial_i^{r_i-s+1}t_i^{p_i}-\binom{r_i}{s-1}\partial_i^{r_i-s+1} G_i(t_{i}^{p_i})\\
&  \hskip1cm-\binom{r_i}{s-2}\partial_i^{r_i-s+2}\a_iF_i(t_{i}^{p_i}))]
   \otimes\cdots\otimes\partial_{m}^{r_m}t_{m}^{p_m}\otimes\partial_{m+1}^{r_{m+1}}\otimes\cdots\otimes
   \partial_{m+n}^{r_{m+n}}\otimes v_{(r,p)}\\
&  \hskip0.5cm+\sum_{(r,p)\in S}\sum_{j=1}^n\sum_{l=0}^{r_{m+j}+1}\partial_{1}^{r_1}t_{1}^{p_1}\otimes\cdots\otimes
   \partial_{i}^{r_i}t_{i}^{p^i}\otimes\cdots\otimes\partial_{m}^{r_m}t_{m}^{p_m}\otimes\partial_{m+1}^{r_{m+1}}\otimes\cdots\\
&  \hskip1cm \otimes k^l\mu_{j}^{k}[(-1)^l(\binom {r_{m+j}}{l}+\binom {r_{m+j}}{l-1}\b_j)]\partial_{m+j}^{r_{m+j}+1-l}\otimes
   \cdots\otimes\partial_{m+n}^{r_{m+n}}\otimes v_{(r,p)}\\
\end{split}
\end{equation*}
Denote by $g_{ks}$ the coefficient of $k^s\l_i^k$ in $Y$ for $ 1\leq i\leq m$, $0\leq s\leq2+\text{max}\{r_i\}$, $b_{kl}$ the coefficient of $k^l\mu_j^k$ for $1\leq j\leq n, 0\leq l\leq\text{ max}\{r_{m+j}\}+1$ and $T_f=\{g_{ks},b_{kl}\}$, $R_K=\{d_k(f): k>K\}$.

Then by Lemma 3.1 we have $\text{span}(R_K)=\text{span}(T_f)$, so $R_f=\text{rank}(T_f)=\text{rank}(R_K)$ is independent of $K>K(f)$.
\par
(2) Since $d_k(1\otimes v)=\sum_{i=1}^m\l_i^k(\partial_i+k(t_i\xi_i+h_i(\a_i))-k^2\a_i\xi_i)\otimes v+\sum_{j=1}^{n}{\mu_j}^k(\partial_{m+j}-k\b_j)\otimes v, \forall k>K$, considering the coefficients of $\l_{i}^{k}, k\l_{i}^{k}, k^2\l_{i}, \mu_{j}^{k}, k\mu_{j}^{k}$ by Lemma 3.1, we have $\text{rank}(d_k(1\otimes v))=2m+n+1$.
\par
 Next we will discuss $R_f$ when $f\notin\{ 1\otimes 1\otimes\cdots\otimes 1\otimes v: v\in V\}$. If $f=\partial_{1}^{r_1}t_{1}^{p_1}\cdots\partial_{m}^{r_m}t_{m}^{p_m}\partial_{m+1}^{r_{m+1}}$
 $\cdots\partial_{m+n}^{r_{m+n}}\otimes v_{(r,p)}$, by the equation $d_k(f)$ above we have some results as in the following
\begin{itemize}
 \item[Case 1] $s=0, l=0$, the coefficients of $\l_i^k$ and $\mu_j^k$ in $d_k(f)$ are
               $$\partial_{1}^{r_1}t_{1}^{p_1}\cdots\partial_i^{r_i+1}t_i^{p_i}\cdots\partial_{m}^{r_m}t_{m}^{p_m}\partial_{m+1}^{r_{m+1}}
               \cdots\partial_{m+n}^{r_{m+n}}\otimes v_{(r,p)}$$
               and
               $$\partial_{1}^{r_1}t_{1}^{p_1}\cdots\partial_{m}^{r_m}t_{m}^{p_m}\partial_{m+1}^{r_{m+1}}\cdots\partial_{m+j}^{r_{m+j}+1}
               \partial_{m+n}^{r_{m+n}}\otimes v_{(r,p)}$$
               respectively, $1\leq i\leq m, 1\leq j\leq n$.
 \item[Case 2] $s=0, l=1$, the coefficient of $k\mu_j^k$ in $d_k(f)$ is
               $$\partial_{1}^{r_1}t_{1}^{p_1}\cdots\partial_{m}^{r_m}t_{m}^{p_m}\partial_{m+1}^{r_{m+1}}\cdots[-r_{m+j}-\b_j]
               \partial_{m+j}^{r_{m+j}}\cdots\partial_{m+n}^{r_{m+n}}\otimes v_{(r,p)}, 1\leq j\leq n,$$
 \item[Case 3] $s=1, l=1$, the coefficient of $k\l_i^k$ in $d_k(f)$ is
               $$\hskip1.5cm\partial_{1}^{r_1}t_{1}^{p_1}\cdots[(h_i(\a_i)-p_i-r_i)\partial_i^{r_i}t_i^{p_i}+\xi_i\partial_i^{r_i}t_i^{p_i+1}]\cdots
               \partial_{m}^{r_m}t_{m}^{p_m}
               \partial_{m+1}^{r_{m+1}}\cdots\partial_{m+n}^{r_{m+n}}\otimes v_{(r,p)}, 1\leq i\leq m,$$
 \item[Case 4] $s=2, l=0$, the coefficient of $k^2\l_i^k$ in $d_k(f)$ is
               $$\hskip0.8cm\partial_{1}^{r_1}t_{1}^{p_1}\cdots[\binom{r_i}{2}\partial_i^{r_i-1}t_i^{p_i}-\binom{r_i}{1}\partial_i^{r_i-1}G_i(t_i^{p_i})-\
               a_i\partial_i^{r_i}F_i(t_i^{p_i})]
               \cdots\partial_{m}^{r_m}t_{m}^{p_m}\partial_{m+1}^{r_{m+1}}\cdots\partial_{m+n}^{r_{m+n}}\otimes v_{(r,p)}, 1\leq i\leq m,$$
  \item[Case 5] $s=2, l=2$, the coefficient of $k^2\mu_j^k$ is
                $$\partial_1^{r_1}t_1^{p_1}\cdots\partial_{m+j}^{r_{m+j}-1}\cdots\partial_{m+n}^{r_{m+n}}\otimes v_{(r,p)}, 1\leq j\leq n,$$
                where $G_i(t_i^{p_i})=(h_i(\a_i)-p_i)t_i^{p_i}+\xi_it_i^{p_i+1}$, $F_i(t_i^{p_i})=\xi_it_i^{p_i}-p_it_i^{p_i-1}$, $1\leq i\leq m$.
 \end{itemize}
  Since $f$ is not $1\otimes v$, $v\in V$, that is  $r_i$ and $p_i$ are not both $0$.
  By Lemma 3.1 and the above four cases, we get rank$\{d_k(f): k> K(f)\}> 2m+n+1$.
  This completes the part (2), and the part (3) is easy to be obtained.

\end{proof}

\end{lemma}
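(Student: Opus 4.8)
The plan is to reduce the computation of $R_f$ to a statement about the rank of an explicit finite list of elements of $Y$, exploiting the exponential--polynomial structure of $k\mapsto d_k(f)$ together with the generalized Vandermonde determinant of Lemma~\ref{1}. Fix $f=\sum_{(r,p)\in S}\partial_1^{r_1}t_1^{p_1}\otimes\cdots\otimes\partial_{m+n}^{r_{m+n}}\otimes v_{(r,p)}\in Y$. Since $V$ is a highest weight module or an induced module $\Ind_\theta(N)$ as in Theorem~\ref{MZ2-2}, a standard commutation argument (push $d_k$ to the right and use $d_kv_h=0$ for $k>0$, resp.\ condition $(b)$ of Theorem~\ref{MZ2-1}) produces a threshold $K(f)\in\N$ with $d_kv_{(r,p)}=0$ for all $(r,p)\in S$ and $k>K(f)$. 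For such $k$ the computation of $d_k(f)$ displayed above expresses it as
\[
d_k(f)=\sum_{i=1}^m\sum_{s=0}^{S_i}k^s\l_i^k\,g_{i,s}+\sum_{j=1}^n\sum_{l=0}^{L_j}k^l\mu_j^k\,b_{j,l},
\]
with $S_i=2+\max_{(r,p)\in S}r_i$ and $L_j=1+\max_{(r,p)\in S}r_{m+j}$, where the coefficient vectors $g_{i,s},b_{j,l}\in Y$ are the ones occurring there and are independent of $k$. As $\l_1,\dots,\l_m,\mu_1,\dots,\mu_n$ are pairwise distinct, Lemma~\ref{1} guarantees that for any $\sum_i(S_i+1)+\sum_j(L_j+1)$ consecutive values of $k$ the associated coefficient matrix is invertible, so each $g_{i,s}$ and $b_{j,l}$ lies in the span of finitely many $d_k(f)$ with $k>K$. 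Hence $\spn\{d_k(f):k>K\}=\spn\{g_{i,s},b_{j,l}\}$ for every $K>K(f)$, which is part $(1)$ (and this common dimension is the value to which the limit defining $R_f$ stabilizes).

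For the ``if'' direction of $(2)$, take $f=1\otimes\cdots\otimes 1\otimes v$; then all $r_i,p_i,r_{m+j}$ vanish and, writing $\xi_i$ for the leading coefficient of $h_i$ (so $F_i(1)=\xi_i$, $G_i(1)=\xi_it_i+h_i(\a_i)$, $\xi_i\neq0$ since $\deg h_i=1$), the nonzero coefficient vectors are
$g_{i,0}=1\otimes\cdots\otimes\partial_i\otimes\cdots\otimes v$,
$g_{i,1}=\xi_i\,(1\otimes\cdots\otimes t_i\otimes\cdots\otimes v)+h_i(\a_i)(1\otimes\cdots\otimes v)$,
$g_{i,2}=-\a_i\xi_i(1\otimes\cdots\otimes v)$,
$b_{j,0}=1\otimes\cdots\otimes\partial_{m+j}\otimes\cdots\otimes v$, and
$b_{j,1}=-\b_j(1\otimes\cdots\otimes v)$.
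Since $\a_i\xi_i\neq0$, these span the $(2m+n+1)$-dimensional subspace with basis $\{1\otimes\cdots\otimes\partial_i\otimes\cdots\otimes v\}_{i=1}^m\cup\{1\otimes\cdots\otimes t_i\otimes\cdots\otimes v\}_{i=1}^m\cup\{1\otimes\cdots\otimes\partial_{m+j}\otimes\cdots\otimes v\}_{j=1}^n\cup\{1\otimes\cdots\otimes v\}$, so $R_f=2m+n+1$.

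The remaining parts, the ``only if'' direction of $(2)$ and part $(3)$, I would prove by a leading-term analysis with respect to the total order on monomials fixed in Section~3. Let $0\neq f\in Y$ have leading term $\partial^{r^0}t^{p^0}\otimes v_0$. The $m+n$ vectors $g_{i,0},b_{j,0}$ are $f$ with one $\partial$-degree raised throughout, hence have pairwise distinct leading monomials, all of degree $>(r^0,p^0)$; the $m$ vectors $g_{i,1}$ each contain the monomial $\xi_i\partial^{r^0}t^{p^0}t_i\otimes v_0$ coming from $G_i$, which has degree larger than every monomial of the previous $m+n$ vectors; and $g_{1,2}$ contains $-\a_1\xi_1\partial^{r^0}t^{p^0}\otimes v_0$ coming from $\a_1F_1$, with no term of larger degree, since the $\binom{r_i}{2}$-, $G_i$- and $F_i$-corrections only decrease degrees and, by maximality of $(r^0,p^0)$, no smaller term of $f$ can feed into degree $(r^0,p^0)$ in $g_{1,2}$. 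Thus these $2m+n+1$ coefficient vectors have distinct leading degrees and so are linearly independent, giving $R_f\ge 2m+n+1$ for every $0\neq f$; combined with the ``if'' direction this yields $R_Y=2m+n+1$, which is $(3)$. For the equality case, suppose $R_f=2m+n+1$ but $f\notin 1\otimes\cdots\otimes 1\otimes V$, so $(r^0,p^0)\neq\0$ and the leading monomial is nontrivial in some tensor slot. If that is the $(m+j_0)$-th slot, then the coefficient of $k^{r^0_{m+j_0}+1}\mu_{j_0}^k$ in $d_k(f)$ is a nonzero vector (its top-$k$-power part is $(-1)^{r^0_{m+j_0}+1}\b_{j_0}$ times the leading monomial with that slot made constant, using $\b_{j_0}\in\C^*$) whose leading degree is strictly below $(r^0,p^0)$, hence below that of all $2m+n+1$ vectors above. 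If instead the leading monomial is nontrivial in one of the first $m$ slots, say the $i_0$-th, one uses $m>1$: for any $i'\neq i_0$ the vectors $g_{i_0,2}$ and $g_{i',2}$ have leading monomials proportional to $\partial^{r^0}t^{p^0}\otimes v_0$, and since $g_{i_0,2}$ alters the (nontrivial) $i_0$-th slot while $g_{i',2}$ leaves it fixed, the difference $g_{i_0,2}-(\text{scalar})\,g_{i',2}$ is nonzero of leading degree $<(r^0,p^0)$; together with $g_{i',2}$ it contributes one more independent coefficient vector. In either case $R_f\ge 2m+n+2$, a contradiction, so $R_f=2m+n+1$ forces $f\in 1\otimes\cdots\otimes 1\otimes V$. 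The hypothesis $m>1$ cannot be dropped: already for $m=1,n=0$ the element $(a+c\,t_1)\otimes v$ with $c\neq0$ satisfies $R_f=2m+n+1$ without being of the form $1\otimes v$.

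The step I expect to be the main obstacle is this last leading-term bookkeeping, carried out for a \emph{general} $f$ rather than for a single monomial tensor as in the computation above: one must rule out both internal cancellations within a given $g_{i,s}$ or $b_{j,l}$ and cancellations between the newly chosen vectors and the other $2m+n+1$ ones, keeping in mind that distinct terms of $f$ may carry linearly dependent vectors $v_{(r,p)}\in V$. Lemma~\ref{1} and the pairwise distinctness of $\l_1,\dots,\l_m,\mu_1,\dots,\mu_n$ are exactly what make the passage from $\{d_k(f):k>K\}$ to the finite family $\{g_{i,s},b_{j,l}\}$ legitimate; without them the argument has no starting point.
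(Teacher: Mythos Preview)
Your argument for part~(1) and for the ``if'' direction of~(2) is exactly the paper's: expand $d_k(f)$ as $\sum_{i,s}k^s\l_i^k g_{i,s}+\sum_{j,l}k^l\mu_j^k b_{j,l}$, invoke the generalized Vandermonde of Lemma~\ref{1} to identify $\spn\{d_k(f):k>K\}$ with $\spn\{g_{i,s},b_{j,l}\}$, and then count for $f=1\otimes\cdots\otimes1\otimes v$.

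For the ``only if'' direction of~(2) your route genuinely differs from the paper's. The paper simply \emph{assumes} $f$ is a single monomial tensor, lists the coefficient vectors for $s=0,1,2$ and $l=0,1,2$, and asserts that their rank exceeds $2m+n+1$; it never addresses a general sum $f=\sum_{(r,p)\in S}(\cdots)\otimes v_{(r,p)}$. Your leading-term argument with respect to the total order of Section~3 is the right way to close this gap, and your inequality $R_f\ge 2m+n+1$ for \emph{every} nonzero $f$ (giving~(3) immediately) is cleaner than what the paper writes. Your observation that $m>1$ is essential, together with the explicit counterexample $(a+ct_1)\otimes v$ for $m=1,\,n=0$, is also something the paper does not make explicit.

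One slip to fix: you write that the monomial $\xi_i\partial^{r^0}t^{p^0}t_i\otimes v_0$ in $g_{i,1}$ ``has degree larger than every monomial of the previous $m+n$ vectors''. In the paper's order the $r$-block precedes the $p$-block, so in fact the leading degree $(r^0+e_{i'},p^0)$ of any $g_{i',0}$ is \emph{larger} than $(r^0,p^0+e_i)$. What you actually need---and what is true---is only that the $2m+n+1$ leading degrees
\[
(r^0+e_i,p^0)\ (1\le i\le m),\quad (r^0,p^0,r^0_{m+\cdot}+e_j)\ (1\le j\le n),\quad (r^0,p^0+e_i)\ (1\le i\le m),\quad (r^0,p^0)
\]
are pairwise \emph{distinct}; linear independence then follows by the usual leading-term argument. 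Your Case~A (nontrivial $(m+j_0)$-th slot) is correct as stated: a short check shows every contribution to $b_{j_0,r^0_{m+j_0}+1}$ has degree strictly below $(r^0,p^0)$. Your Case~B sketch is the right idea, but as you anticipate, verifying that the scaled difference $g_{i_0,2}-c\,g_{i',2}$ is nonzero for a general $f$ requires tracking which lower-degree monomials can and cannot cancel; this is the one place where your proof still needs a line or two of honest bookkeeping.
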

In the paper [GWL], we have known $\Omega(\l_1, \a_1, h_1)\bigotimes V_1\cong\Omega(\l_2, \a_2, h_2)\bigotimes V_2$ as Vir modules if and only if
$\l_1=\l_2, \a_1\xi_1=\a_2\xi_2$ and $V_1\cong V_2$, where $\l_i\in\mathbb{C^*}, deg(h_i)=1$, and $\a_i\neq 0$, where $i=1,2$.
Obviously, $\Omega(\l_i, \a_i, h_i)\bigotimes V_i\cong\Omega(\l_i,\a_i\xi_i, t_i)\bigotimes V_i$, $i=1,2$. Therefore,
the isomophism problem of two modules as $\bigotimes_{i=1}^{m}\Omega(\lambda_i, \alpha_i, h_i )\bigotimes_{j=1}^{n}\Omega(\mu_j, \b_j )\bigotimes V$ can be converted into the isomophism of two modules as $\bigotimes_{i=1}^{m}\Omega(\lambda_i, a_i, t_i )\bigotimes_{j=1}^{n}\Omega(\mu_j, \b_j )\bigotimes V$

\begin{proposition} \label{4.2}
Let $m, n\in\Z_+$, $\l_i, a_i, \mu_j, \b_j, \l'_i, a_i', \mu_j', \b_j'\in\mathbb{C^*}$ , $\l_1, \cdots, \l_m, \mu_1,\cdots, \mu_n$ are pairwise distinct, $\l'_1,\cdots, \l'_m, \mu'_1,\cdots, \mu'_n$ are pairwise distinct, $V_1, V_2$ be irreducible modules over Vir such that $d_k$ is locally finite on them for all $k>K$, $K$ is a fixed integer. Then $\bigotimes_{i=1}^{m}\Omega(\lambda_i, a_i, t_i )\bigotimes_{j=1}^{n}\Omega(\mu_j, \b_j )\bigotimes V\cong\bigotimes_{i'=1}^{m'}\Omega(\l'_{i'}, a'_{i'}, t'_{i'} )\bigotimes_{j'=1}^{n'}\Omega(\mu'_{j'}, \b'_{j'} )\bigotimes V'$ if and only if
$m=m', n=n', V\cong V', (\l_i, a_i)=(\l'_{i'}, a'_{i'}), (\mu_j, \b_j)=(\mu'_{j'}, \b'_{j'})$, where $i, i'=1,2,\cdots, m$, and $j, j'=1,2,\cdots, n$.
\begin{proof}
The sufficiency of the proposition is clear, we need only to prove the necessity. Denote $Y=\bigotimes_{i=1}^{m}\Omega(\lambda_i, a_i, t_i )\bigotimes_{j=1}^{n}\Omega(\mu_j, \b_j )\bigotimes V$ and $Y'=\bigotimes_{i'=1}^{m'}\Omega(\l'_{i'}, a'_{i'}, t'_{i'} )\bigotimes_{j'=1}^{n'}\Omega(\mu'_{j'}, \b'_{j'} )\bigotimes V'$,

 Let $\phi: Y\rightarrow Y'$ be a module isomorphism. From the part (2) of Lemma 4.1, we know that there is a vector space isomorphism $\tau : V\rightarrow V'$ such that $\phi(1\otimes v)=1\otimes \tau(v)$ for any $v\in V$. By Lemma 4.1, $\text{rank}(d_k(1\otimes v))=2m+n+1=\text{rank}(d_k(1\otimes\tau(v)))=2m'+n'+1$.\\

 \begin{claim}  For each $\l_i$ there is $i'$ such that $\l_i=\l'_{i'}$, for $i=1,2,\cdots,m$, and $i'=1,2,\cdots, m'$.
\end{claim}
Suppose $i=1$, $\l_1=\mu'_{j'}$ for some $j'\in Z_+$ and $1\leq j'\leq n'$. Since
\par
 $\phi(d_k(1\otimes v))=\sum_{i=1}^m\l_i^k\phi[(\partial_i+kG_i(1)-k^2a_i)\otimes v]+\sum_{j=1}^n\mu_j^k\phi[(\partial_{m+j}-k\b_j)\otimes v],$
\par
 $d_k\phi(1\otimes v)=\sum_{i'=1}^{m'}(\l'_{i'})^k[(\partial'_{i'}+kG'_{i'}(1)-k^2{a'}_{i'})
 \otimes\tau(v)]+\sum_{j'=1}^{n'}(\mu'_{j'})^{k}[({\partial}'_{m'+j'}-k\b'_{j'})\otimes\tau(v)].$
 \par
 $\phi(d_k(1\otimes v))-d_k\phi(1\otimes v)$=0. By Lemma 3.1,  the coefficient of $k^2\l_1^k$ is $\phi(-a_i\otimes v)=0$, then $a_i=0$, this is a contradiction.

Suppose $\l_1\neq \l'_{i'}$ for all $1\leq i'\leq m'$ to the contrary, $i'\in\Z_+$. Take $v\in V$, for $K\geq K(v)$, and $k, l>K$, we have
\begin{equation*}\begin{split}
  & (\l_1^{-k}d_k-\l_1^{-l}d_l)(1\otimes v)\\
= & \sum_{i=2}^m[(\frac{\l_i}{\l_1})^k-(\frac{\l_i}{\l_1})^l]\partial_i\otimes v+\sum_{i=1}^m[k(\frac{\l_i}{\l_1})^k-l(\frac{\l_i}{\l_1})^l]G_i(1)\otimes v +\sum_{i=1}^m[l^2(\frac{\l_i}{\l_1})^l-k^2(\frac{\l_i}{\l_1})^k]a_i\otimes v\\
 & +\sum_{j=1}^n[(\frac{\mu_j}{\l_1})^k-(\frac{\mu_j}{\l_1})^l]\partial_{m+j}\otimes v+\sum_{j=1}^n[l(\frac{\mu_j}{\l_1})^l-k(\frac{\mu_j}{\l_1})^k]\b_j\otimes v
\end{split}\end{equation*}

By Lemma 3.1 in the paper [TZ] and the properties of determinants (fixing $l$ and letting $k$ vary), we have that
 $$\text{rank}((\l_1^{-k}-\l_1^{-l}d_l)(1\otimes v_1): k,l>K)=2m+n.$$
But
\begin{equation*}\begin{split}
 & (\l_1^{-k}d_k-\l_1^{-l}d_l)(1\otimes \tau(v))\\
=& \sum_{i'=1}^{m'}[(\frac{\l'_{i'}}{\l_1})^k-(\frac{\l'_{i'}}{\l_1})^l]\partial'_{i'}\otimes\tau{(v)}
   +\sum_{i'=1}^{m'}[k(\frac{\l'_{i'}}{\l_1})^k-l(\frac{\l'_{i'}}{\l_1})^l]G'_{i'}(1)\otimes\tau{(v)}\\
&  +\sum_{i'=1}^{m'}[l^2(\frac{\l'_{i'}}{\l_1})^l-k^2(\frac{\l'_{i'}}{\l_1})^k]a'_{i'}\otimes\tau{(v)}
   +\sum_{j'=1}^{n'}[(\frac{\mu'_{j'}}{\l_1})^k-(\frac{\mu'_{j'}}{\l_1})^l]\partial'_{m'+j'}\otimes\tau{(v)}\\
&  +\sum_{j'=1}^{n'}[l(\frac{\mu'_{j'}}{\l_1})^l-k(\frac{\mu'_{j'}}{\l_1})^k]\b'_{j'}\otimes\tau{(v)}.
\end{split}
\end{equation*}
Using Lemma 3.1 in [TZ] again and the properties of determinants, we get
$\rank((\l_1^{-k}-\l_1^{-l}d_l)(1\otimes\tau(v): k,l>K)=2m'+n'+1$ which is a contradiction to $\text{rank}((\l_1^{-k}-\l_1^{-l}d_l)(1\otimes v): k,l>K)=2m+n$.
So $\l_1=\l'_{i'}$ for some $i'$, this completes Claim 1.

Since $\l_1, \l_2, \cdots, \l_m$ are pairwise distinct, we deduce that$\l_i=\l'_{i'}$ for each $i, i', 1\leq i, i'\leq m$, $i,i'\in\Z_+$, $m=m'$. for convenience, we denote $\l_i=\l'_i$ for each $i$. In the same way, we can have $\mu_j=\mu'_j$ for each $1\leq j\leq n, j\in\Z_+$.

\begin{claim}
$a_i=a'_i, \b_j=\b'_{j}$, for each $1\leq i\leq m, 1\leq j\leq n, i, j\in\Z_+$.
\end{claim}

Take a nonzero vector $v\in V$. For $k\geq \text{max}\{K(v), K(\tau(v))\}$, $\phi(d_k(1\otimes v))=d_k(\phi(1\otimes v))=d_k(1\otimes\tau(v))$,
we have
\begin{equation*}
\begin{split}
&  \sum_{i=1}^m\l_i^k[\phi(\partial_i\otimes v)-\partial'_i\otimes\tau(v)]+\sum_{i=1}^mk\l_i^k[\phi((a_i+t_i)\otimes v)-(a'_i+t'_i)\otimes\tau(v)]\\
+&  \sum_{i=1}^mk^2\l_i^k[a'_i\otimes\tau(v)-\phi(a_i\otimes v)]+\sum_{j=1}^n\mu_j^k[\phi(\partial_{m+j}\otimes v)-\partial'_{m+j}\otimes\tau(v)]\\
+& \sum_j^nk\mu_j^k[\b'_j\otimes\tau(v)-\phi(\b_j\otimes v)]=0.
\end{split}
\end{equation*}
By Lemma 3.1 of [TZ], $\phi(\partial_i\otimes v)=\partial'_i\otimes\tau(v)$, $a_i=a'_i$ and $\phi((a_i+t_i)\otimes v)=(a'_i+t'_i)\otimes\tau(v)$, i.e.,$\phi(t_i\otimes v)=t'_i\otimes\tau(v)$, $\phi(\partial_{m+j}\otimes v)=\partial'_{m+j}\otimes\tau(v), \b'_j=\b_j$, $1\leq i\leq m$. This completes Claim 2.

\begin{claim}$\tau$ is a module isomorphism.
\end{claim}

We need only to prove $\tau$ is a module homomorphism.
By Claim 1 and Claim 2 and $\phi(d_k(1\otimes v))=d_k(1\otimes\tau(v))$, we deduce that $\phi(1\otimes d_k(v))=1\otimes d_k(\tau(v))$,
yielding that $\tau d_k(v)=d_k\tau(v)$. At the same time, $\phi(c(1\otimes v))=1\otimes\tau(c v)$, $c\phi((1\otimes v))=c(1\otimes\tau(v))=1\otimes c\tau(v)$, so $\tau(c v)=c\tau(v)$, therefore, $\tau: V\rightarrow V'$ is a module homomorphism, this complete Claim 3.

\end{proof}

\end{proposition}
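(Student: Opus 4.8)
The plan is to dispose of sufficiency at once — the tensor product of the evident identifications $\Omega(\lambda_i,a_i,t_i)\cong\Omega(\lambda'_i,a'_i,t'_i)$, $\Omega(\mu_j,\beta_j)\cong\Omega(\mu'_j,\beta'_j)$ and $V\cong V'$ is a $\Vir$-isomorphism — and to concentrate on necessity. So let $\phi\colon Y\to Y'$ be a $\Vir$-module isomorphism, and abbreviate $1\otimes v$ for $1\otimes\cdots\otimes1\otimes v$ with $v\in V$. The first step is to pin down the image of the bottom layer $1\otimes V$. Since $\phi$ transports the invariant $R_f$ of the preceding lemma to the corresponding invariant of $Y'$, we get $R_Y=R_{Y'}$, that is $2m+n+1=2m'+n'+1$; and since by part (2) of that lemma the set $\{f\in Y\setminus\{0\}:R_f=2m+n+1\}$ is precisely $1\otimes V$ (small values of $m,m'$ aside, where one appeals to the isomorphism criteria recalled above), the restriction of $\phi$ is a linear isomorphism $\tau\colon V\to V'$ with $\phi(1\otimes v)=1\otimes\tau(v)$ for all $v$. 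Everything afterwards is extracted by applying $\phi$ to $d_k(1\otimes v)$, writing both $\phi(d_k(1\otimes v))$ and $d_k\phi(1\otimes v)=d_k(1\otimes\tau(v))$ as explicit finite sums, and comparing, term by term, the coefficients of the functions $k^s\lambda_i^k$ and $k^l\mu_j^k$ via the linear independence furnished by Lemma \ref{1}.

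The second step identifies the exponential parameters: $\{\lambda_1,\dots,\lambda_m\}=\{\lambda'_1,\dots,\lambda'_{m'}\}$ and $\{\mu_1,\dots,\mu_n\}=\{\mu'_1,\dots,\mu'_{n'}\}$. The decisive structural fact is that each factor $\Omega(\lambda_i,a_i,t_i)$ contributes to $d_k(1\otimes v)$ a summand of order $k^2\lambda_i^k$ — the $-k^2a_iF_i(1)$ piece, nonzero since $a_i\ne0$ — whereas each $\Omega(\mu_j,\beta_j)$ contributes nothing beyond order $k\,\mu_j^k$. Consequently, if some $\lambda_i$ equalled a $\mu'_{j'}$, then equating the $k^2\lambda_i^k$-coefficients of $\phi(d_k(1\otimes v))$ and $d_k(1\otimes\tau(v))$ via Lemma \ref{1} would give $a_i\,\phi(1\otimes v)=0$, hence $a_i=0$ (as $\phi(1\otimes v)\ne0$), contradicting $a_i\in\C^*$; and if some $\lambda_i$ lay outside $\{\lambda'_{i'}\}\cup\{\mu'_{j'}\}$ altogether, then computing the ranks of $\{(\lambda_i^{-k}d_k-\lambda_i^{-l}d_l)(1\otimes v):k,l>K\}$ on the two sides through the determinant formula of Lemma \ref{1} gives $2m+n$ on the $Y$-side (the $\partial_i$-coefficient becomes constant after rescaling by $\lambda_i^{-k}$, so it drops out of the difference) but $2m'+n'+1$ on the $Y'$-side, incompatible with $2m+n+1=2m'+n'+1$. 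So each $\lambda_i$ lies in $\{\lambda'_{i'}\}$; pairwise distinctness forces $m=m'$ and, after relabeling, $\lambda_i=\lambda'_i$, and the symmetric argument for the $\mu$'s gives $n=n'$, $\mu_j=\mu'_j$. I expect this step — keeping the quadratic-in-$k$ growth of the $\Omega(\lambda,\alpha,h)$-factors cleanly separated from the linear-in-$k$ growth of the $\Omega(\mu,\beta)$-factors, and feeding the correct matrices into Lemma \ref{1} — to be the main obstacle.

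The remaining steps are routine coefficient-matching. With $\lambda_i=\lambda'_i$ and $\mu_j=\mu'_j$ in hand, comparing $\phi(d_k(1\otimes v))$ with $d_k(1\otimes\tau(v))$ coefficientwise yields, in turn, $\phi(\partial_i\otimes v)=\partial'_i\otimes\tau(v)$ from the $\lambda_i^k$-terms, $a_i=a'_i$ from the $k^2\lambda_i^k$-terms, $\phi(t_i\otimes v)=t'_i\otimes\tau(v)$ from the $k\lambda_i^k$-terms, $\phi(\partial_{m+j}\otimes v)=\partial'_{m+j}\otimes\tau(v)$ from the $\mu_j^k$-terms, and $\beta_j=\beta'_j$ from the $k\mu_j^k$-terms; in particular $(\lambda_i,a_i)=(\lambda'_i,a'_i)$ and $(\mu_j,\beta_j)=(\mu'_j,\beta'_j)$. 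Subtracting all of these now-identified contributions from $\phi(d_k(1\otimes v))=d_k(1\otimes\tau(v))$ leaves $\phi(1\otimes d_k(v))=1\otimes d_k(\tau(v))$, i.e. $1\otimes\tau(d_kv)=1\otimes d_k(\tau v)$, so $\tau d_k=d_k\tau$ for all large $k$; applying $\phi$ to $c(1\otimes v)$ gives $\tau c=c\tau$ as well. Hence $\tau\colon V\to V'$ is a $\Vir$-module isomorphism, which completes the necessity and the proposition.
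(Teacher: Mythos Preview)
Your proposal is correct and follows essentially the same route as the paper: use the rank invariant $R_f$ from the preceding lemma to force $\phi(1\otimes v)=1\otimes\tau(v)$; rule out $\lambda_i=\mu'_{j'}$ via the $k^2\lambda_i^k$-coefficient, rule out $\lambda_i\notin\{\lambda'_{i'}\}\cup\{\mu'_{j'}\}$ via the rank of $\{\lambda_i^{-k}d_k-\lambda_i^{-l}d_l\}$; then read off $a_i=a'_i$, $\beta_j=\beta'_j$ and the images of $\partial_i,t_i,\partial_{m+j}$ by coefficient-matching, and finally subtract to show $\tau$ is a $\Vir$-map. One small slip: your subtraction argument in fact gives $\tau d_k=d_k\tau$ for \emph{all} $k\in\Z$, not just large $k$ --- the identifications $\phi(\partial_i\otimes v)=\partial'_i\otimes\tau(v)$ etc.\ are $k$-free once established, and the expansion of $d_k(1\otimes v)$ is valid for every $k$ --- so you should state it that way, since commuting only with $d_k$ for large $k$ would not by itself make $\tau$ a $\Vir$-homomorphism.
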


\section{New irreducible Virasoro module}

In this section we will compare the irreducible tensor products with
all other known non-weight irreducible Virasoro modules in
\cite{LZ1, LLZ, MZ2, MW}, and \cite{TZ1}.
For any $s\in \mathbb{Z}_{+}, l,m\in \mathbb{Z},$ as in \cite{LLZ},
we denote
$$ \omega^{(s)}_{l,m} = \sum^{s}_{i=0}\binom{s}{i}(-1)^{s-i}d_{l-m-i}d_{m+i} \in U(\Vir).$$
\begin{lemma} \label{2}
 Let $\lambda_{j_1},\mu_{j_2}\in \mathbb{C}^{*}$, $\deg(h_{j_1}) = 1$ and $\alpha_{j_2},\b_{j_2}\neq 0.$ Let $V$ be an infinite dimensional
irreducible $\Vir$ module such that each $d_{k}$ is locally finite on $V$ for all
$k \geq R$ for a fixed $R \in \mathbb{N}.$
\begin{enumerate}
\item \label{5.1.1}
For any positive integer $n,$ the action of $\Vir^{(n)}_{+}$ on
$\bigotimes_{j_1=1}^{n_1}\Omega(\lambda_{j_1}, \alpha_{j_1}, h_{j_1})\bigotimes_{j_2=1}^{n_2}(\mu_{j_2},\b_{j_2})$
$\otimes V$ is not locally finite.
\item \label{5.1.2}
Suppose $h_{j_1}=\xi_{j_1} t_{j_1}+\eta_{j_1}, \xi_{j_1}\neq0$.
$$W=\text{span}\{\partial_{1}^{r_1}t_{1}^{p_1}\otimes\cdots\otimes\partial_{n_1}^{r_{n_1}}t_{n_1}^{p_{n_1}}\otimes\partial_{{n_1}+1}^{r_{{n_1}+1}}
\otimes\cdots\otimes\partial_{{n_1}+n_2}^{r_{{n_1}+n_2}}:\text{for some}\quad r_i>s, 1\leq i\leq n_1+n_2\},$$
we have
$\omega^{(s)}_{l,m}(W)$ is not zero,$\ \forall\ l,m\in \mathbb{Z}, s,r_{i}\in \mathbb{Z_{+}}.$
\item \label{5.1.3}
For any integer $s > 4,$ there exists $v\in V,m,l\in
\mathbb{Z}$ such that in $\bigotimes_{{j_1}=1}^{n_1}\Omega(\lambda_{j_1}, \alpha_{j_1}, h_{j_1})$\\
$\bigotimes_{j_2=1}^{n_2}$$(\mu_{j_2},\b_{j_2})\otimes V$ we have
$  \omega^{(s)}_{l,-m}(1\otimes v) \neq 0.$
\end{enumerate}
\end{lemma}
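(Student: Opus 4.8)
I would organize the proof of the three parts as follows.

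\emph{Part (1).} The plan is a leading‑term argument with respect to the total order on monomials fixed in Section~3. Fix any $k>n$ and a nonzero $v\in V$ and put $w_0=1\otimes\cdots\otimes 1\otimes v$. Applying $d_k$ to a monomial $\partial_1^{r_1}\otimes(\text{rest})$, the ``$\partial_1 f$'' summand of the action~(\ref{def}) on the first factor yields $\partial_1^{r_1+1}\otimes(\text{rest})$ with coefficient $\l_1^{k}$; in the order this dominates every other summand (an action on $V$ or on a later factor only raises a variable subordinate to $r_1$), and it is the only summand with first exponent $r_1+1$. By induction on $N$, $d_k^{N}w_0$ then has unique maximal term $\l_1^{Nk}\,\partial_1^{N}\otimes 1\otimes\cdots\otimes 1\otimes v$, so the vectors $d_k^{N}w_0$ are linearly independent; already the single operator $d_k\in\Vir^{(n)}_+$ has an infinite‑dimensional orbit on $w_0$, and $\Vir^{(n)}_+$ is not locally finite.

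\emph{Parts (2) and (3) — the common mechanism.} Expand $\omega^{(s)}_{l,m}=\sum_{i=0}^{s}\binom{s}{i}(-1)^{s-i}d_{l-m-i}d_{m+i}$ on a decomposable tensor via the coproduct. In a \emph{diagonal} term, where both $d_{l-m-i}$ and $d_{m+i}$ act on one $\Omega$‑factor with parameter $\l$ (resp.\ $\mu$), the prefactor is $\l^{(l-m-i)+(m+i)}=\l^{l}$, independent of $i$; and a direct computation with~(\ref{def}) gives, with $l=a+b$, $d_a d_b(\partial^{r}t^{p})=\l^{l}(\partial-l)^{r}\bigl[(\partial t^{p}+aG(t^{p})-a^{2}\a F(t^{p}))(\partial-a)+(\text{terms of lower order in }a,b)\bigr]$ on $\Omega(\l,\a,h)$ with $h=\xi t+\eta$ --- a polynomial in $i$ (with $a=l-m-i$) of degree $\le 4$ whose top coefficient $\a^{2}F(F(t^{p}))\,\l^{l}(\partial-l)^{r}$ is nonzero because $\xi_{j_1}\ne 0$ --- and $d_a d_b(\partial^{n})=\mu^{l}(\partial-\b a)(\partial-a-\b b)(\partial-l)^{n}$ on $\Omega(\mu,\b)$, of degree $\le 2$ in $i$ with top coefficient $\b(1-\b)\mu^{l}(\partial-l)^{n}\ne 0$. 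Summing over $i$, a diagonal contribution is $\l^{l}$ (or $\mu^{l}$) times $\sum_{i=0}^{s}\binom{s}{i}(-1)^{s-i}P(i)$ with $\deg P\le 4$ (or $\le 2$), hence vanishes once $s>4$. A \emph{cross} term (the two $d$'s on distinct factors with parameters $\l_p\ne\l_q$) instead carries the genuinely exponential prefactor $\l_p^{\,l-m}\l_q^{\,m}(\l_q/\l_p)^{i}$ with $\l_q/\l_p\ne 1$, and since distinct pairs of parameters give distinct rates, Lemma~\ref{1} separates them and each contribution is governed by a nonzero power of $(\l_q/\l_p)-1$; this is where pairwise distinctness of $\l_1,\dots,\mu_{n_2}$ enters.

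\emph{Part (2).} If $s\le 4$, take a basis monomial $w\in W$ with $r_{i_0}>s$ and extract from the diagonal contribution on the $i_0$‑th factor the term $\l_{i_0}^{l}(\partial_{i_0}-l)^{r_{i_0}}\sum_{i}\binom{s}{i}(-1)^{s-i}P(i)$, which is nonzero for suitable $w$ since $P$ has degree exactly $4$ with nonvanishing top coefficient $\a^{2}F(F(t^{p_{i_0}}))$. If $s>4$ the diagonal terms are dead, so one works with two factors: inside $\omega^{(s)}_{l,m}(w)$ isolate the cross contribution of a pair $(i_0,i_1)$ and the output monomial disturbing $w$ in the mildest way; by Lemma~\ref{1} its coefficient is a nonzero multiple of a power of $(\l_{i_1}/\l_{i_0})-1$ times the nonzero leading data above, so $\omega^{(s)}_{l,m}(w)\ne 0$. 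This last step is the analogue of Proposition~3.2 of~\cite{CG}; it is the delicate point, since one must certify that the various cross contributions do not cancel one another.

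\emph{Part (3).} Fix $s>4$, take $v=v_h$ a highest weight vector of $V$ when $V=V(\theta,h)$ (the case $V\cong\Ind_{\theta}(N)$ being analogous, with $v$ annihilated by all sufficiently positive $d_k$), and choose $l=1$ and $m$ so large that $-m+i<0$ and $d_{-m}v_h\ne 0$ for $0\le i\le s$. In $\omega^{(s)}_{1,-m}(1\otimes\cdots\otimes v_h)=\sum_i\binom{s}{i}(-1)^{s-i}d_{1+m-i}d_{-m+i}(1\otimes\cdots\otimes v_h)$ the purely‑$\Omega$ diagonal pieces vanish since $s>4$; the purely‑$V$ piece vanishes because $d_{-m+i}v_h$ sits in level $m-i<1+m-i$, so $d_{1+m-i}$ kills it; and in every term where $d_{1+m-i}$ hits the $V$‑slot the result is $d_{1+m-i}v_h=0$. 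The survivors therefore lie in $\bigotimes\Omega\otimes\spn\{d_{-m+i}v_h:0\le i\le s\}$, complementary (distinct weights) to the span of the vanishing pieces tensored with $v_h$; and the component along $\partial_p\otimes 1\otimes\cdots\otimes 1\otimes d_{-m}v_h$ (for any $\Omega$‑factor $p$) can only arise by letting $d_{-m}$ act on the $V$‑slot and $d_{1+m}$ on factor $p$, so it equals $(-1)^{s}\l_p^{1+m}\,\partial_p\otimes 1\otimes\cdots\otimes 1\otimes d_{-m}v_h\ne 0$, whence $\omega^{(s)}_{l,-m}(1\otimes v)\ne 0$. In both~(2) and~(3) the genuine obstacle is identical: confirming that the surviving cross (and cross‑with‑$V$) contributions do not cancel, which is exactly what forces the careful use of Lemma~\ref{1} and of the distinctness hypotheses.
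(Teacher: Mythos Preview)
Your Part~(1) is the paper's argument, just with the leading term made explicit.

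For Part~(3), the overall strategy matches the paper's (pick a distinguished $v$, choose $m$ so that $d_{-m+i}v$ are nonzero and independent, and exhibit a surviving cross $\Omega$--$V$ contribution), but your parameter choice and killing mechanism differ. You take $l=1$ and kill the purely-$V$ piece $d_{1+m-i}d_{-m+i}v_h$ by a weight argument (the result would sit above the highest weight). The paper instead takes $l>K$ large and kills these terms by local nilpotency: one first fixes $K$ so that $d_k$ annihilates each of $v,d_{-2}v,\dots,d_{-s-2}v$ for all $k>K$, and then $l>K$ forces $d_{l+m-i}(d_{-m+i}v)=0$. Your weight argument is clean when $V$ is a highest weight module, but it does \emph{not} carry over to $V\cong\Ind_\theta(N)$ as written: there is no weight grading to appeal to, and your parenthetical ``analogous'' is not justified with $l=1$. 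The paper's choice of $l>K$ is exactly what makes the argument uniform over both cases of $V$. Also, you only assert that the \emph{diagonal} purely-$\Omega$ pieces vanish for $s>4$; the cross purely-$\Omega$ pieces need not vanish, but this is harmless since they are tensored with $v$, which is weight-independent from every $d_{-m+i}v$---you essentially say this, but the sentence ``The survivors therefore lie in \dots'' overstates what you have shown.

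For Part~(2), your route is genuinely different from the paper's and also leaves the hard step open. The paper's argument is short: it expands $\omega^{(s)}_{l,m}(f)$, observes that because some $r_i>s$ the expansion contains terms whose $i$-degree exceeds $s$, and then invokes the identity $\sum_{i=0}^{s}(-1)^{s-i}\binom{s}{i}i^j=0$ for $j<s$ to conclude nonvanishing. You instead split $s\le 4$ versus $s>4$, use the diagonal $\alpha^2F^2$ top term for small $s$, and appeal to Lemma~\ref{1} on cross terms for large $s$. In either approach the hypothesis $r_i>s$ must enter to produce the surviving high-order contribution, and in both approaches the non-cancellation between the many cross pieces (which carry exponential factors $(\lambda_q/\lambda_p)^i$, not just polynomials in $i$) is the real work; you flag this honestly, whereas the paper's one-line appeal to the binomial identity glosses over the same point. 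If you want to close the gap cleanly, it is easier to follow the paper's line: pick a monomial with a single $r_{i_0}>s$ and all other exponents zero, isolate one specific output monomial whose coefficient involves $i^{r_{i_0}}$, and check directly that nothing else contributes to that monomial.
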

\begin{proof} (1).  For any $v\in V$ and
$k\in\Z_+$, it is clear that
$d_{n+1}^1(1\otimes\cdots\otimes1\otimes v),\cdots d_{n+1}^k(1\otimes\cdots\otimes1\otimes v)$ are linearly
independent. So the assertion follows easily.

\noindent(2). For any $f\in W$, we have
\begin{equation*}
\begin{split}
  & \omega^{(s)}_{l,m}(f)
=  \sum^{s}_{i=0}\binom{s}{i}(-1)^{s-i}d_{l-m-i}d_{m+i}(f)\\
= & \sum^{s}_{i=0}\binom{s}{i}(-1)^{s-i}d_{l-m-i}\Big(\sum_{(r,p)\in S}\sum_{j_1=1}^{n_1}\partial_{1}^{r_1}t_{1}^{p_1}\otimes\cdots\otimes
    d_{m+i}(\partial_{j_1}^{r_{j_1}}t_{j_1}^{p_{j_1}})\otimes\cdots\otimes\partial_{n_1}^{r_{n_1}}t_{n_1}^{p_{n_1}}\otimes\partial_{{n_1}+1}^{r_{{n_1}+1}}\otimes\cdots\\
  & \hskip0.5cm  \otimes\partial_{{n_1}+n_2}^{r_{{n_1}+n_2}}
    +\sum_{(r,p)\in S}\sum_{j_2=1}^{n_2}\partial_{1}^{r_1}t_{1}^{p_1}\otimes\cdots\otimes \partial_{n_1}^{r_{n_1}}t_{n_1}^{p_{n_1}}\otimes\partial_{{n_1}+1}^{r_{{n_1}+1}}\otimes\cdots\otimes d_{m+i}(\partial_{{n_1}+j_{2}}^{r_{{n_1}+j_{2}}})
    \otimes\cdots\otimes\partial_{{n_1}+n_2}^{r_{{n_1}+n_2}}\Big)\\
= & \sum^{s}_{i=0}\binom{s}{i}(-1)^{s-i}\Big(\sum_{(r,p)\in S}\big(\sum_{j_{1}=1}^{n_1}\sum_{k_{1}\neq j_{1}}^{n_1}\partial_{1}^{r_1}t_{1}^{p_1}\otimes\cdots\otimes
    d_{l-m-i}(\partial_{k_1}^{r_{k_1}}t_{k_1}^{p_{k_1}})\otimes\cdots\otimes d_{m+i}(\partial_{j_1}^{r_{j_1}}t_{j_1}^{p_{j_1}})\\
  & \hskip8cm   \otimes\cdots\otimes\partial_{n_1}^{r_{n_1}}t_{{n_1}}^{p_{n_1}}\otimes\partial_{{n_1}+1}^{r_{{n_1}+1}}\otimes\cdots\otimes\partial_{n_1+n_2}^{r_{n_1+n_2}}\\
  & +\sum_{j_{1}=1}^{n_1}\partial_{1}^{r_1}t_{1}^{p_1}\otimes\cdots\otimes
    d_{l-m-i}d_{m+i}(\partial_{j_1}^{r_{j_1}}t_{j_1}^{p_{j_1}})\otimes\cdots\otimes\partial_{m}^{r_m}t_{m}^{p_m}\otimes\partial_{m+1}^{r_{m+1}}
    \otimes\cdots\otimes\partial_{{n_1}+n_2}^{r_{{n_1}+n_2}}\\
  & +\sum_{j_{1}=1}^{n_1}\sum_{j_{2}=1}^{n_2}\partial_{1}^{r_1}t_{1}^{p_1}\otimes\cdots\otimes
    d_{m+i}(\partial_{j_1}^{r_{j_1}}t_{j_1}^{p_{j_1}})\otimes\cdots\otimes\partial_{n_1}^{r_{n_1}}t_{{n_1}}^{p_{n_1}}\otimes\partial_{{n_1}+1}^{r_{{n_1}+1}}
    \otimes\cdots\otimes d_{l-m-i}(\partial_{{n_1}+j_{2}}^{r_{{n_1}+j_{2}}})\\
  & \hskip8cm  \otimes\cdots\otimes\partial_{{n_1}+n_2}^{r_{{n_1}+n_2}}\big)\\
  & +\sum_{(r,p)\in S}\big(\sum_{j_{2}=1}^{n_2}\sum_{k_{2}\neq j_{2}}^{n_2}\partial_{1}^{r_1}t_{1}^{p_1}\otimes\cdots\otimes
    \partial_{n_1}^{r_{n_1}}t_{n_1}^{p_{n_1}}\otimes\partial_{{n_1}+1}^{r_{{n_1}+1}}\otimes\cdots\otimes d_{l-m-i}(\partial_{{n_1}+k_2}^{r_{{n_1}+k_2}})
    \otimes\cdots\otimes d_{m+i}(\partial_{{n_1}+j_2}^{r_{{n_1+j_2}}})\\
  & \hskip8cm   \otimes\cdots\otimes\partial_{{n_1}+n_2}^{r_{{n_1}+n_2}}\\
  & +\sum_{j_{2}=1}^{n_2}\partial_{1}^{r_1}t_{1}^{p_1}\otimes\cdots\otimes \partial_{n_1}^{r_{n_1}}t_{n_1}^{p_{n_1}}\otimes\partial_{{n_1}+1}^{r_{{n_1}+1}}
     \otimes\cdots\otimes d_{l-m-i}d_{m+i}(\partial_{{n_1}+j_2}^{r_{{n_1+j_2}}})\otimes\cdots\otimes\partial_{{n_1}+n_2}^{r_{{n_1+n_2}}}\\
  & +\sum_{j_{2}=1}^{n_2}\sum_{j_{1}=1}^{n_1}\partial_{1}^{r_1}t_{1}^{p_1}\otimes\cdots\otimes
    d_{l-m-i}(\partial_{j_1}^{r_{j_1}}t_{j_1}^{p_{j_1}})\otimes\cdots\otimes\partial_{n_1}^{r_{n_1}}t_{{n_1}}^{p_{n_1}}\otimes\partial_{{n_1}+1}^{r_{{n_1}+1}}
    \otimes\cdots\otimes d_{m+i}(\partial_{{n_1}+j_{2}}^{r_{{n_1}+j{2}}})\\
  & \hskip8cm   \otimes\cdots\otimes\partial_{{n_1}+n_2}^{r_{{n_1}+n_2}}\big)\Big).\\
\end{split}
\end{equation*}
Since
\begin{equation*}
\begin{split}
 &   d_{l-m-i}(\partial_{j_1}^{r_{j_1}}t_{j_1}^{p_{j_1}})
   = \l_{j_1}^{l-m-i}(\partial_{j_1}-l+m+i)^{r_{j_1}}\Big(\partial_{j_1}t_{j_1}^{p_{j_1}}+(l-m-i)G_{j_1}(t_{j_1}^{p_{j_1}})-(l-m-i)^2\\
 &   \hskip9cm  \a_{j_1}F_{j_1}(t_{j_1}^{p_{j_1}})\Big);\\
 &   d_{m+i}(\partial_{n_1+j_2}^{r_{n_1+j_2}})
   = \mu_{j_2}^{m+i}(\partial_{n_1+j_2}-m-i)^{r_{n_1+j_2}}\Big(\partial_{n_1+j_2}-(m+i)\b_{j_2}\Big),\\
 &   d_{m+i}(\partial_{j_1}^{r_{j_1}}t_{k_1}^{p_{j_1}})
   = \l_{j_1}^{m+i}(\partial_{j_1}-m-i)^{r_{j_1}}\Big(\partial_{j_1}t_{k_1}^{p_{j_1}}+(m+i)G_{j_1}(j_{k_1}^{p_{j_1}})-
     (m+i)^2\a_{j_1}F_{j_1}(t_{j_1}^{p_{j_1}})\Big);\\
\end{split}
\end{equation*}
\begin{equation*}
\begin{split}
 &   d_{l-m-i}d_{m+i}(\partial_{j_1}^{r_{j_1}}t_{j_1}^{p_{j_1}})
   = \l_{j_1}^{l}(\partial_{j_1}-l)^{r_{j_1}}\Big((\partial_{j_1}-l+m+i)\big(\partial_{j_1}t_{j_1}^{p_{j_1}}+(l-m-i)G_{j_1}(t_{j_1}^{p_{j_1}})\\
 &  \hskip10cm  -(l-m-i)^2\a_{j_1}F_{j_1}(t_{j_1}^{p_{j_1}})\big)\\
 &  \hskip2cm  +(m+i)\big(\partial_{j_1}G_{j_{1}}(t_{j_1}^{p_{j_1}})+(l-m-i)G^{2}_{j_1}(t_{j_1}^{p_{j_1}})-(l-m-i)^2
               \a_{j_1}F_{j_1}G_{j_1}(t_{j_1}^{p_{j_1}})\big)\\
 &  \hskip2cm  -(m+i)^{2}\a_{j_{1}}\big(\partial_{j_1}F_{j_1}(t_{j_1}^{p_{j_1}})+(l-m-i)G_{j_1}F_{j_1}(t_{j_1}^{p_{j_1}})-(l-m-i)^2
               \a_{j_1}F^{2}_{j_1}(t_{j_1}^{p_{j_1}})\big)\Big)\\
 &   d_{l-m-i}(\partial_{n_1+j_2}^{r_{n_1+j_2}})
   =  \mu_{j_2}^{l-m-i}(\partial_{n_1+j_2}-l+m+i)^{r_{n_1+j_2}}\Big(\partial_{n_1+j_2}-(l-m-i)\b_{j_2}\Big);\\
 &   d_{l-m-i}d_{m+i}(\partial_{n_1+j_2}^{r_{n_1+j_2}})=
     \mu_{j_2}^{l}(\partial_{n_1+j_2}-l)^{r_{n_1+j_2}}\Big((\partial_{n_1+j_2}-l+m+i)\big(\partial_{n_1+j_2}\\
 &  \hskip4cm  -(l-m-i)\b_{j_2}\big)-(m+i)\b_{j_2}\big(\partial_{n_1+j_2}-(l-m-i)\b_{j_2}\big)\Big)\\
\end{split}
\end{equation*}
By computing above, there must be some items in which the maximum power of $i$ is higher than $s$ in $\omega_{l,m}^{(s)}(f)$,
then using the following identity
\begin{equation}\label{identity} \sum_{i=0}^r(-1)^{r-i}{r\choose
i}i^j=0,\ \forall\ j, r\in\Z_+ {\rm\ with\ } j<r,
\end{equation}
we easily get
$\omega^{(s)}_{l,m}(f)$ is not zero.

\noindent(3). Fix any $s>4$. Take $v$ to be a highest weight vector
in $V$ if $V$ is a highest weight module, otherwise $v$ can be any
nonzero vector in $V.$ From \cite{FF} and \cite{MZ2} we know that
the vectors $v, d_{-2}v, d_{-3}v,\cdots, d_{-s-2}v$ are linearly
independent in $V$ and there exists $K\in\N$ such that these vectors
are annihilated by $d_{m}$ for all $m>K$. For any $l>K$ and $m=s+2$,
we can easily compute that $\omega^{(s)}_{l,-m}(1) = 0$
provided $s>4,$ by lemma \ref{2} \eqref{5.1.2}.
Hence we also deduce that
\begin{equation*}
\begin{split}
\omega^{(s)}_{l,-m}(1\otimes v) = & \sum^{s}_{i=0}\binom{s}{i}(-1)^{s-i}d_{l+m-i}d_{-m+i}(1\otimes v)\\
 = & \sum^{s}_{i=0}\binom{s}{i}(-1)^{s-i}d_{l+m-i}(1)\otimes d_{-m+i}(v)\\
\end{split}
\end{equation*}
which is nonzero.
\end{proof}

\begin{theorem}
The $\Vir$-modules $\bigotimes_{i=1}^{m}\Omega(\l_{i},\a_{i},h_{i})\bigotimes_{j=1}^{n}\Omega(\mu_j,\b_j)$ or
$\bigotimes^{m}_{i=1}\Omega(\lambda_{i},\alpha_{i},h_{i})$\\
$\bigotimes_{j=1}^{n}\Omega(\mu_j,\b_j)\otimes V$ is not isomorphic to any
irreducible module defined in \cite{MZ2, LZ1, LLZ}.
\end{theorem}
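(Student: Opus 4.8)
The plan is to separate the three families of modules appearing in \cite{MZ2, LZ1, LLZ} and, for each, produce an invariant of the module structure that our modules do not share. Throughout, write $Y$ for $\bigotimes_{i=1}^{m}\Omega(\l_{i},\a_{i},h_{i})\bigotimes_{j=1}^{n}\Omega(\mu_j,\b_j)$ or for its tensor product with $V$, with the hypotheses of Lemma \ref{2} in force; in particular $m,n\geq 1$, $\deg h_i=1$, $\a_i,\b_j\neq0$, and each $d_k$ with $k$ large is locally finite on $V$.

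The modules $\Ind_\theta(N)$ of \cite{MZ2} are, by Theorem \ref{MZ2-2}, locally finite as $\Vir^{(k)}_{+}$-modules for a suitable $k\in\N$ (the same being true of highest weight modules). Lemma \ref{2} \eqref{5.1.1} asserts the opposite for $Y$: for every $n$ the vectors $d_{n+1}^{k}(1\otimes\cdots\otimes 1)$, respectively $d_{n+1}^{k}(1\otimes\cdots\otimes 1\otimes v)$, are linearly independent for all $k$, so $\Vir^{(n)}_{+}$ is not locally finite on $Y$. Since local finiteness of a $\Vir^{(k)}_{+}$-action is preserved by module isomorphisms, $Y$ is isomorphic to no module of \cite{MZ2}.

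For the simple modules $\Omega(\mu,\b)$ ($\b\neq1$) of \cite{LZ1} I would use the numerical invariant $R$ of Lemma 4.1, which is manifestly preserved by isomorphisms. In $\Omega(\mu,\b)=\C[\partial]$ one has $d_{k}(1)=\mu^{k}\partial-\b k\mu^{k}$, so $\{d_{k}(1):k>K\}$ spans the $2$-dimensional subspace $\C\partial+\C1$ and $R_{\Omega(\mu,\b)}=2$; but Lemma 4.1 gives $R_{Y}=2m+n+1>2$, the same count holding for the module without the factor $V$. Hence $Y\not\cong\Omega(\mu,\b)$.

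Finally, the modules of \cite{LLZ} are those on which the quadratic operators $\omega^{(s)}_{l,m}$ act in the controlled way recalled there (vanishing, or locally nilpotent, on a suitable generating subspace). Lemma \ref{2} \eqref{5.1.2} shows $\omega^{(s)}_{l,m}$ never annihilates the subspace $W$, for any $l,m,s$, and Lemma \ref{2} \eqref{5.1.3} produces, for every $s>4$, a vector $1\otimes v$ and integers $l,m$ with $\omega^{(s)}_{l,-m}(1\otimes v)\neq0$; the former handles $Y$ without $V$ and the latter handles $Y\otimes V$, so neither can satisfy the defining property of the \cite{LLZ}-modules. I expect this to be the delicate case: one must check that such non-vanishing genuinely contradicts that defining property, and if it is phrased as local nilpotency rather than outright vanishing, a short iteration of the estimates in Lemma \ref{2} is needed on top of it. The engine behind Lemma \ref{2} \eqref{5.1.2}--\eqref{5.1.3} is the identity \eqref{identity}, $\sum_{i=0}^{r}(-1)^{r-i}\binom{r}{i}i^{j}=0$ for $j<r$, which kills all low powers of $i$ in the alternating sum defining $\omega^{(s)}_{l,m}(f)$; one then expands $d_{l-m-i}d_{m+i}$ on a monomial in each tensor factor, locates a term in which the power of $i$ exceeds $s$, and verifies that its coefficient is nonzero.
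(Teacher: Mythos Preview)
Your treatment of the \cite{MZ2} case matches the paper exactly.

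For \cite{LZ1} you diverge from the paper, and there is a gap. The irreducible modules in \cite{LZ1} are not just the $\Omega(\mu,\b)$; the paper denotes the general family there by $A_b$ (built from irreducible Weyl modules), of which $\Omega(\mu,\b)$ is only the rank-one case. Your rank invariant $R$ disposes of $\Omega(\mu,\b)$ cleanly, but you have not computed $R_{A_b}$ in general, and there is no reason it should stay equal to $2$. The paper instead uses the uniform vanishing property
\[
\omega^{(r)}_{l,m}(A_b)=0\quad\text{for all }l,m\in\Z,\ r\geq 3,
\]
quoted from \cite{LLZ,TZ1}, and contrasts it with Lemma \ref{2}\,\eqref{5.1.2}--\eqref{5.1.3}. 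So for \cite{LZ1} you should either extend your rank computation to all $A_b$ or switch to the $\omega$-argument.

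For \cite{LLZ} your instinct is right but your hedging is unnecessary: the relevant property is outright vanishing, not local nilpotency. Concretely, for $\mathcal{N}(M,\beta)$ (with $M$ an irreducible $\mathfrak a_k$-module) one has $\omega^{(r)}_{l,m}(\mathcal{N}(M,\beta))=0$ for all $r>2k+2$, so a single nonvanishing $\omega^{(s)}$ with $s$ large enough (supplied by Lemma \ref{2}\,\eqref{5.1.2} for $Y$ and \eqref{5.1.3} for $Y\otimes V$) already rules out isomorphism; no iteration is needed. The paper also records, for the borderline $k=1$, that $\omega^{(4)}_{l,m}(1\otimes\cdots\otimes 1)$ is a scalar multiple of $1\otimes\cdots\otimes 1$ in $Y$, whereas in $\mathcal{N}(M,\beta)$ the image $\omega^{(4)}_{l,m}(u)$ is always independent of $u$; this extra observation is not strictly required once one uses $s>2k+2$, but it sharpens the distinction.
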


\begin{proof} For any irreducible modules in \cite{MZ2}, there
exists a positive integer $n$ such that $d_n$ acts locally finitely.
So neither $\bigotimes^{m}_{i=1}\Omega(\l_{i},\a_{i}, h_{i})\bigotimes_{j=1}^{n}\Omega(\mu_j,\b_j)$ nor
$\bigotimes^{m}_{i=1}\Omega(\l_{i},\a_{i},h_{i})$
$\bigotimes_{j=1}^{n}\Omega(\mu_j,\b_j)\otimes
V$ is isomorphic to any irreducible modules constructed in
\cite{MZ2} by Lemma \ref{2} \eqref{5.1.1}.

Then we consider the irreducible non-weight $\Vir$-module $A_b$
defined in \cite{LZ1}. From the proof of Theorem 9 in \cite{LLZ} or
the argument in the proof of Corollary 4 in \cite{TZ1}, we have
\begin{equation}\label{omega_A}
\omega^{(r)}_{l,m}(A_{b}) = 0,\ \forall\ l, m \in \mathbb{Z}, r\geq
3. \end{equation}
Combining this with Lemma \ref{2} \eqref{5.1.2} and \eqref{5.1.3},
we see easily that $\bigotimes_{i=1}^{m}\Omega(\l_{i},\a_{i},h_{i})\bigotimes_{j=1}^{n}\Omega(\mu_j,$
$\b_j)\not\cong A_b$ and
$\bigotimes_{i=1}^{m}\Omega(\l_{i},\a_{i},h_{i})\bigotimes_{j=1}^{n}\Omega(\mu_j,\b_j)\otimes V\not\cong A_b$

Now we recall the irreducible non-weight Virasoro modules defined in
\cite{LLZ}. Let $M$ be an irreducible module over the Lie algebra
$\mathfrak{a}_{k} := \Vir_{+}/ \Vir^{(k)}_{+}, k \in \mathbb{N}$
such that the action of $\bar{d_{k}} := d_{k}+\Vir^{(k)}_{+}$ on $M$
is injective, where $\Vir^{(k)}_{+}=\{d_i\ |\ i>k\}$ and
$\Vir_{+}=\spn\{d_i\ |\ i\in\Z_+\}$. For any $\beta\in
\mathbb{C}[t^{\pm1}]\setminus \mathbb{C}$, the $\Vir$-module
structure on $\mathcal{N}(M,\beta) = M\otimes \mathbb{C}[t^{\pm1}]$
is defined by
\begin{equation*}\begin{array}{l}
d_{m}\cdot (v\otimes t^{n}) = (n +
\sum^{k}_{i=0}\frac{m^{i+1}}{(i+1)!}\bar{d_{i}})v\otimes t^{n+m} +v\otimes (\beta t^{m+n}),\\\\
 c\cdot (v\otimes t^{n}) = 0, \ \forall\ m, n \in \mathbb{Z}.
\end{array}\end{equation*} From the computation in $(6.7)$
of \cite{LLZ} we see that
\begin{equation}\label{omega_N(M)}\begin{array}{l}
\omega^{(r)}_{l,m}(\mathcal{N}(M,\beta)) = 0,\ \forall\ l,m \in \mathbb{Z}, r > 2k+2,\\\\
\omega^{(2k+2)}_{l,m}(w\otimes t^{i}) = (2k+2)!(-1)^{k+1}(\bar{d_k}
^2 w)\otimes t^{i+l}\neq 0,\ \forall\ l,m\in\Z.
\end{array}\end{equation}
Combining the first equation of \eqref{omega_N(M)} with Lemma \ref{2} \eqref{5.1.2} and \eqref{5.1.3}, we see that $\mathcal{N}(M,\beta)$ is not isomorphic
to $\bigotimes_{i=1}^{m}\Omega(\l_{i},\a_{i},h_{i})\bigotimes_{j=1}^{n}\Omega(\mu_j,\b_j)$ and
$\bigotimes_{i=1}^{m}\Omega(\l_{i},\a_{i},h_{i})\bigotimes_{j=1}^{n}\Omega(\mu_j,\b_j)\otimes V$. If
$k=1$, we see that $u$ and $\omega^{(4)}_{l,m}(u)$ are linearly
independent for any $u\in\mathcal{N}(M,\beta)$ provided  $l\neq 0$,
while
$\omega^{(4)}_{l,m}(1\otimes\cdots\otimes1)=\big(\sum^{m}_{i=1}24\l_{i}^{l}\a_{i}^{2}\xi_{i}^{2}+
\sum^{m}_{i=1}\sum^{m}_{j\neq i}24\l_{i}^{l-m-i}\l_{j}^{m+i}\a_{i}^{2}\xi_{i}^{2}\a_{j}^{2}\xi_{j}^{2}\big)(1\otimes\cdots\otimes1)$
in $\bigotimes_{i=1}^{m}\Omega(\l_{i},\a_{i},h_{i})\bigotimes_{j=1}^{n}\Omega(\mu_j,\b_j)$ by Lemma \ref{2} \eqref{5.1.2}, so that
$\bigotimes_{i=1}^{m}\Omega(\l_{i},\a_{i},h_{i})\bigotimes_{j=1}^{n}\Omega(\mu_j,\b_j)\not\cong\mathcal{N}(M,\beta)$
in this case. This complete the Theorem.
\end{proof}
\par

Finally, we compare our tensor product modules are not isomorphic 
to the tensor products modules defined in [TZ2]. 
%

\begin{theorem} Suppose that $m\neq0$.
The $\Vir$-modules $\bigotimes_{i=1}^{m}\Omega(\l_{i},\a_{i},h_{i})\bigotimes_{j=1}^{n}\Omega(\mu_j,\b_j)$ or
$\bigotimes^{m}_{i=1}\Omega(\lambda_{i},\alpha_{i},h_{i})$
$\bigotimes_{j=1}^{n}\Omega(\mu_j,\b_j)\otimes V$ is not isomorphic to any
module $\bigotimes_{k=1}^l\Omega(\mu_k, \b_k)\bigotimes V'$. 
\end{theorem}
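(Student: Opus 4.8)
The plan is to read off from the $d_k$-action for $k\gg0$ a ``second-order in $k$'' operator that is an isomorphism invariant and that behaves incompatibly on the two families. For a $\Vir$-module $M$ and $w\in M$, call $w$ \emph{tame} if there are $K\in\N$, finitely many $\sigma\in\C^{*}$ and $w_{\sigma,s}\in M$ with $d_k(w)=\sum_{\sigma,s}k^{s}\sigma^{k}w_{\sigma,s}$ for all $k>K$; by Lemma \ref{1} the functions $k\mapsto k^{s}\sigma^{k}$ (distinct $\sigma$) are linearly independent, so the $w_{\sigma,s}$ are unique and we may put $Q_{\rho}(w):=w_{\rho,2}$ (and $Q_{\rho}(w)=0$ if $k^{2}\rho^{k}$ does not occur), a well-defined linear operator. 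Every $w$ in $Y:=\bigotimes_{i=1}^{m}\Omega(\l_{i},\a_{i},h_{i})\bigotimes_{j=1}^{n}\Omega(\mu_{j},\b_{j})\otimes V$ (with or without the factor $V$) and in any $Y':=\bigotimes_{k=1}^{l}\Omega(\mu_{k},\b_{k})\otimes V'$ is tame: on each $\Omega$-factor $d_k$ acts as $\l_i^{k}$ (resp.\ $\mu_j^{k}$) times a polynomial in $k$, and it kills any given vector of $V$ (resp.\ $V'$) once $k$ is large enough, since $V,V'$ are highest weight modules or of the form $\Ind_{\theta}(N)$ (\cite{FF},\cite{MZ2}; this is the standing hypothesis, and in any case an isomorphism $Y\cong Y'$ forces $Y'$, hence $V'$, to be irreducible, so Theorem \ref{MZ2-2} applies). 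A module isomorphism $\phi$ commutes with every $d_k$, hence — by the uniqueness above — $\phi(Q_{\rho}(w))=Q_{\rho}(\phi(w))$ for all $\rho$. So ``there exists a nonzero $w$ with $Q_{\rho}(w)=cw$ for some $c\neq0$'' is an isomorphism invariant, and the proof reduces to showing $Y$ has such a vector while $Y'$ does not.

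For $Y$, take $g=1\otimes\cdots\otimes1\otimes v$ with $0\neq v\in V$ killed by $d_k$ for $k\gg0$ (omit $v$ altogether if $Y$ has no factor $V$), and write $h_i=\xi_i t_i+\eta_i$ with $\xi_i\neq0$ since $\deg h_i=1$. Using $F_i(1)=\xi_i$ and $G_i(1)=\xi_i t_i+h_i(\a_i)$, formula \eqref{def} gives, for $k\gg0$,
\[
d_k(g)=\sum_{i=1}^{m}\l_i^{k}\!\left(\partial_i\otimes(\cdots)\otimes v+k\,(\xi_i t_i+h_i(\a_i))\otimes(\cdots)\otimes v-k^{2}\a_i\xi_i\,g\right)+\sum_{j=1}^{n}\mu_j^{k}\!\left(\partial_{m+j}\otimes(\cdots)\otimes v-k\b_j\,g\right),
\]
the $\Omega(\mu_j,\b_j)$-factors contributing only terms of degree $\le1$ in $k$ and the $V$-part vanishing for $k\gg0$. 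Since $\l_1,\dots,\l_m,\mu_1,\dots,\mu_n$ are pairwise distinct, the coefficient of $k^{2}\l_1^{k}$ is unambiguous and equals $-\a_1\xi_1\,g$; thus $Q_{\l_1}(g)=-\a_1\xi_1\,g$ with $-\a_1\xi_1\neq0$ (as $\a_1\in\C^{*}$, $\xi_1\neq0$). Hence $g$ is a nonzero $Q_{\l_1}$-eigenvector with nonzero eigenvalue — and this is exactly the point where the hypotheses $\a_1\neq0$ and $\deg h_1=1$ are used.

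For $Y'$, I claim that for every $\rho\in\C^{*}$ the operator $Q_{\rho}$ has no nonzero eigenvector with nonzero eigenvalue. If $\rho\notin\{\mu_1,\dots,\mu_l\}$ then $Q_{\rho}\equiv0$ on $Y'$, because for tame $w'\in Y'$ the function $d_k(w')$ ($k\gg0$) lies in the span of the $k^{s}\mu_k^{k}$ only. If $\rho=\mu_{k_0}$, set $J=\{k:\mu_k=\rho\}$ and, for $0\neq w'\in Y'$, let $\delta(w')$ be the maximum of $\sum_{k\in J}\deg_{\partial_k}(P)$ over monomials $P$ occurring in $w'$ with nonzero $V'$-coefficient. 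Since the coefficient of $k^{2}$ in $\mu^{-k}d_k(\partial^{n})$ for $\Omega(\mu,\b)$ equals $\big(\binom{n}{2}+n\b\big)\partial^{n-1}$, of $\partial$-degree $n-1$, and since $d_k$ is a derivation which (for $k\gg0$) kills the $V'$-part and whose contributions along $\mu_k^{k}$ with $k\notin J$ do not affect the $k^{2}\rho^{k}$-coefficient, one gets $\delta(Q_{\rho}(w'))\le\delta(w')-1$, with $Q_{\rho}(w')=0$ when $\delta(w')=0$. Thus $Q_{\rho}$ is locally nilpotent on $Y'$ and has only the eigenvalue $0$. Consequently, if $\phi\colon Y\to Y'$ were an isomorphism, $\phi(g)\neq0$ would satisfy $Q_{\l_1}(\phi(g))=\phi(Q_{\l_1}(g))=-\a_1\xi_1\,\phi(g)$ with $-\a_1\xi_1\neq0$, which is impossible; hence $Y\not\cong Y'$.

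The only steps that are not routine bookkeeping are the two $k^{2}$-coefficient computations above: that on the ``vacuum'' of a tensor product containing an $\Omega(\l,\a,h)$-factor this coefficient is a \emph{nonzero scalar times the vector itself}, and that on an $\Omega(\mu,\b)$-factor it \emph{strictly lowers the $\partial$-degree}. Both are immediate from the module actions recalled in Section 2. The genuine obstacle is conceptual rather than computational — namely, recognising that this ``$k^{2}$-part of $d_k$'' is the right invariant; once $Q_{\rho}$ is isolated, the fact that an isomorphism intertwines it, and the linear-independence input, come directly from Lemma \ref{1}.
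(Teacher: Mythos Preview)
Your argument is correct and takes a genuinely different route from the paper's. The paper dispatches the theorem in a single line: the two modules are not isomorphic already as $\C[d_0]$-modules (on every $\Omega$-factor $d_0$ acts as multiplication by $\partial$, so the presence of the extra polynomial variable $t_i$ in each $\Omega(\lambda_i,\alpha_i,h_i)$-factor with $m\neq0$ is what is meant to separate the $\C[d_0]$-structures). You instead build an invariant from the large-$k$ behaviour of $d_k$: the operator $Q_\rho$ reading off the $k^{2}\rho^{k}$-coefficient of $d_k(w)$ for $k\gg0$. You show that $Q_{\lambda_1}$ has the nonzero eigenvalue $-\alpha_1\xi_1$ on the vector $1\otimes\cdots\otimes1\otimes v\in Y$, while every $Q_\rho$ is locally nilpotent on $Y'$, which yields the contradiction. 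Your route is longer but fully self-contained (resting only on Lemma~\ref{1} for the linear-independence step), and it makes transparent exactly where the hypotheses $\alpha_1\neq0$ and $\deg h_1=1$ enter, namely through $-\alpha_1\xi_1\neq0$. The paper's approach is quicker but leaves the $\C[d_0]$-module comparison to the reader; your approach trades brevity for an explicit, transportable invariant.

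One minor remark: your parenthetical about deducing local nilpotence of $d_k$ on $V'$ via irreducibility and Theorem~\ref{MZ2-2} is not quite a complete argument on its own (irreducibility of $Y'$ does not directly supply the locally-finite hypothesis on $V'$), but this does not matter, since the tensor products from \cite{TZ2} being compared against already assume $V'$ is of that type.
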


\begin{proof} The result is obvious since the modules in question are not isomorphic as $\C[d_0]$-modules.
\end{proof}


\begin{thebibliography}{99}

\bibitem[A]{A} A. Astashkevich, On the structure of Verma modules over Virasoro and Neveu-Schwarz algebras,
{\it Comm. Math. Phys.} {\bf 186}(3) (1997), 531--562.

\bibitem[BM]{BM} P.~Batra and V.~Mazorchuk, Blocks and modules for Whittaker pairs. {\it J. Pure Appl. Algebra},
215(7) (2011) 1552--1568.

\bibitem[CG]{CG} H. Chen and X. Guo, A new family of modules over the Virasoro algebra. Preprint.

\bibitem[CGZ]{CGZ} H. Chen, X. Guo and K. Zhao, Tensor product weight modules over the
Virasoro algebra. {\it J. Lond. Math. Soc. (2)}, 83(3) (2013)
829--844.

\bibitem[CM]{CM} C.~Conley and C.~Martin, A family of irreducible representations
of the Witt Lie algebra with infinite-dimensional weight spaces.
{\it Compos. Math.}, 128(2) (2001) 153--175.

\bibitem[FF]{FF} B.~Feigin, D.~Fuchs, Verma modules over the Virasoro algebra, Funktsional. {\it Anal. i Prilozhen}, 17(3), 91-92(1983).

\bibitem[FMS]{FMS} P. Di Francesco, P. Mathieu, D. Senechal, Conformal Field Theory, Springer, New York, 1997.

\bibitem[GLZ]{GLZ} X.~Guo, R.~Lu and K.~Zhao, Fraction representations and highest-weight-like representations of the  Virasoro algebra,
{\it J. Algebra,} 387(2013) 68--86.

\bibitem[GO]{GO} P. Goddard, D. Olive, "Kac-Moody and Viraroso algebra relation to quantum physics", Intemat. J. Mod. Phys. A, 303-414(1986).

\bibitem[IK]{IK} K. Iohara, Y. Koga , Representation theory of Virssoro algebra,
 Spinger Monographs in Mathematics, Spinger-Verlag London Lthd., London, 2010.

\bibitem[KR]{KR} V.~Kac and A.~Raina, Bombay lectures on highest weight
representations of infinite dimensional Lie algebras. World Sci.,
Singapore, 1987.


\bibitem[LGZ]{LGZ} R.~L{\"u}, X.~Guo and K.~Zhao, Irreducible modules over the Virasoro algebra, {\it Doc. Math.}, 16
(2011) 709--721.

\bibitem[LL]{LL} J. Lepowsky, H. Li, Introduction to Vertex Operator Algebras and Their Representations, Progess in Mathematics, 227, Birkhauser Boston Inc.,
Boston, 2004.

\bibitem[LLZ]{LLZ} G. Liu, R.~L{\"u} and K.~Zhao, A class of simple weight Virasoro modules. {\it J. Algebra}, 424 (2015), 506--521.


\bibitem[LZ1]{LZ1} R.~L{\"u} and K.~Zhao, Irreducible  Virasoro modules from irreducible Weyl modules, {\it J. Algebra,} 414 (2014), 271--287.

\bibitem[LZ2]{LZ2} R.~L{\"u} and K.~Zhao,  A family of simple weight modules over the Virasoro algebra. Preprint, arXiv:1303.0702.

\bibitem[M]{M} O. Mathieu, Classification of Harish-Chandra modules over the
Virasoro Lie algebra. {\it Invent. Math.}, 107(2) (1992) 225--234.

\bibitem[MW]{MW} V.~Mazorchuk and E. Weisner, Simple Virasoro modules induced from codimension one subalgebras of
the positive part. {\it Proc. Amer. Math. Soc.}, in press,
arXiv:1209.1691,

\bibitem[MZ1]{MZ1} V.~Mazorchuk and K.~Zhao, Classification of simple weight Virasoro
modules with a finite-dimensional weight space. {\it J.~Algebra},
307 (2007) 209--214.

\bibitem[MZ2]{MZ2} V.~Mazorchuk and K.~Zhao, Simple Virasoro modules which are locally finite over a positive part.
{\it Selecta Math. (N.S.)}, 20(2014), no. 3, 839--854.

\bibitem[R]{R} G. Radobolja, Application of vertex algebras to the structure theory of certain representations over the Virasoro algebra.
{\it Algebr. Represent. Theory}, 17 (2014), no. 4, 1013--1034.

\bibitem[TZ1]{TZ1}  H.~ Tan and K.~Zhao, Irreducible Virasoro modules from
tensor products. Preprint, arXiv:1301.2131.

\bibitem[TZ2]{TZ2} H.~ Tan and K.~Zhao, Irreducible Virasoro modules from tensor products (II). {\it J. Algebra}, 394 (2013) 357-373.

\bibitem[Zh]{Zh} H.~Zhang, A class of representations over the Virasoro algebra, {\it J. Algebra} {\bf 190}(1)
(1997), 1--10.
\end{thebibliography}
\end{document}